\tikzset{vertex/.style={circle,draw,fill,inner sep=0pt,minimum size=1mm}}
\newtheorem{thm}{Theorem}
\newtheorem{lem}[thm]{Lemma}
\newtheorem{prop}[thm]{Proposition}
\newtheorem{cor}[thm]{Corollary}
\newtheorem{assert}[thm]{Assertion}
\newtheorem{remarks}[thm]{Remark}
\newtheorem{definition}[thm]{Definition}
\newtheorem{exl}[thm]{Example}
\numberwithin{thm}{section}
\newcommand{\adj}{\leftrightarrow}
\newcommand{\adjeq}{\leftrightarroweq}
\def\Z{{\mathbb Z}}
\def\N{{\mathbb N}}
\def\R{{\mathbb R}}
\begin{document}

\title{Remarks on Fixed Point Assertions in Digital Topology, 4}

\author{Laurence Boxer
\thanks{Department of Computer and Information Sciences, Niagara University, NY 14109, USA; and Department of Computer Science and Engineering, State University of New York at Buffalo
email: boxer@niagara.edu}
}
\date{ }
\maketitle
\begin{abstract}
We continue the work of~\cite{BxSt19,Bx19,Bx19-3}, in which we discuss published assertions 
that are incorrect or incorrectly proven; that are severely limited or reduce to triviality;
or that we improve upon.

MSC: 54H25

Key words and phrases: digital topology, fixed point,
metric space
\end{abstract}

\section{Introduction}
As stated in~\cite{Bx19}:
\begin{quote}
The topic of fixed points in digital topology has drawn
much attention in recent papers. The quality of
discussion among these papers is uneven; 
while some assertions have been correct and interesting, others have been incorrect, incorrectly proven, or reducible to triviality.
\end{quote}
Paraphrasing~\cite{Bx19} slightly: 
in~\cite{BxSt19,Bx19,Bx19-3}, we have discussed many shortcomings in earlier papers and have offered
corrections and improvements. We continue this work in the current paper.

A common theme among many weak papers concerning fixed points in digital topology
is the use of a ``digital metric space" (see section~\ref{DigMetSp} for its definition).
This seems to be a bad idea.
\begin{itemize}
    \item Nearly all correct nontrivial published assertions concerning digital metric 
          spaces use either the adjacency of the digital image or the metric, but not both. Thus the notion of a digital metric space does not seem natural.
    \item If $X$ is finite (as in the ``real world") or the metric $d$ 
          is a common metric such as any $\ell_p$ metric, then $(X,d)$
          is discrete as a topological space, hence not very interesting.
    \item Many of the published assertions concerning digital metric spaces
          mimic analogues for connected subsets of Euclidean~$\R^n$. Often,
          the authors neglect important differences between the topological 
          space $\R^n$ and digital images, resulting in assertions that are 
          incorrect, trivial, or trivial when restricted to conditions that
          many others regard as essential. E.g., in many cases, functions that
          satisfy fixed point assertions must be constant or fail to be
          digitally continuous~\cite{BxSt19,Bx19,Bx19-3}.
\end{itemize}
This paper continues the work of~\cite{BxSt19,Bx19,Bx19-3} in discussing 
shortcomings of published assertions concerning fixed points in digital topology.

\section{Preliminaries}
We use $\N$ to represent the natural numbers,
$\Z$ to represent the integers, and $\R$ to represent the reals.

A {\em digital image} is a pair $(X,\kappa)$, where $X \subset \Z^n$ 
for some positive integer $n$, and $\kappa$ is an adjacency relation on $X$. 
Thus, a digital image is a graph.
In order to model the ``real world," we usually take $X$ to be finite,
although there are several papers that consider
infinite digital images. The points of $X$ may be 
thought of as the ``black points" or foreground of a 
binary, monochrome ``digital picture," and the 
points of $\Z^n \setminus X$ as the ``white points"
or background of the digital picture.

\subsection{Adjacencies, connectedness, continuity, fixed point}
In a digital image $(X,\kappa)$, if
$x,y \in X$, we use the notation
$x \adj_{\kappa}y$ to
mean $x$ and $y$ are $\kappa$-adjacent; we may write
$x \adj y$ when $\kappa$ can be understood. 
We write $x \adjeq_{\kappa}y$, or $x \adjeq y$
when $\kappa$ can be understood, to
mean 
$x \adj_{\kappa}y$ or $x=y$.

The most commonly used adjacencies in the study of digital images 
are the $c_u$ adjacencies. These are defined as follows.
\begin{definition}
Let $X \subset \Z^n$. Let $u \in \Z$, $1 \le u \le n$. Let 
$x=(x_1, \ldots, x_n),~y=(y_1,\ldots,y_n) \in X$. Then $x \adj_{c_u} y$ if 
\begin{itemize}
    \item for at most $u$ distinct indices~$i$,
    $|x_i - y_i| = 1$, and
    \item for all indices $j$ such that $|x_j - y_j| \neq 1$ we have $x_j=y_j$.
\end{itemize}
\end{definition}

\begin{definition}
{\rm \cite{Rosenfeld}}
A digital image $(X,\kappa)$ is
{\em $\kappa$-connected}, or just {\em connected} when
$\kappa$ is understood, if given $x,y \in X$ there
is a set $\{x_i\}_{i=0}^n \subset X$ such that
$x=x_0$, $x_i \adj_{\kappa} x_{i+1}$ for
$0 \le i < n$, and $x_n=y$.
\end{definition}

\begin{definition}
{\rm \cite{Rosenfeld, Bx99}}
Let $(X,\kappa)$ and $(Y,\lambda)$ be digital
images. A function $f: X \to Y$ is 
{\em $(\kappa,\lambda)$-continuous}, or
{\em $\kappa$-continuous} if $(X,\kappa)=(Y,\lambda)$, or
{\em digitally continuous} when $\kappa$ and
$\lambda$ are understood, if for every
$\kappa$-connected subset $X'$ of $X$,
$f(X')$ is a $\lambda$-connected subset of $Y$.
\end{definition}

\begin{thm}
{\rm \cite{Bx99}}
A function $f: X \to Y$ between digital images
$(X,\kappa)$ and $(Y,\lambda)$ is
$(\kappa,\lambda)$-continuous if and only if for
every $x,y \in X$, if $x \adj_{\kappa} y$ then
$f(x) \adjeq_{\lambda} f(y)$.
\end{thm}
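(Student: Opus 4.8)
The plan is to prove the two implications separately, in each case unwinding the definition of $\kappa$-connectedness as the existence of a path whose consecutive terms are strictly $\kappa$-adjacent.

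For the forward implication, I would assume $f$ is $(\kappa,\lambda)$-continuous and fix $x,y \in X$ with $x \adj_{\kappa} y$. The set $X' = \{x,y\}$ is $\kappa$-connected, since the two-term sequence $x,y$ is already a $\kappa$-path. Continuity then forces $f(X') = \{f(x),f(y)\}$ to be $\lambda$-connected. The key observation is that a subset with at most two points is $\lambda$-connected precisely when its two points are equal or $\lambda$-adjacent: any path joining two distinct points of a two-point set can use only those two points, so consecutive (hence the endpoints) must be $\lambda$-adjacent. This yields $f(x) \adjeq_{\lambda} f(y)$, as required.

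For the reverse implication, I would assume the adjacency-preserving condition and fix an arbitrary $\kappa$-connected $X' \subseteq X$. To show $f(X')$ is $\lambda$-connected, I take $a,b \in f(X')$, say $a = f(x_0)$ and $b = f(x_m)$ with $x_0,x_m \in X'$. Connectedness of $X'$ supplies a $\kappa$-path $x_0 \adj_{\kappa} x_1 \adj_{\kappa} \cdots \adj_{\kappa} x_m$ inside $X'$, and applying the hypothesis to each consecutive pair gives $f(x_i) \adjeq_{\lambda} f(x_{i+1})$, with every $f(x_i) \in f(X')$.

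The one point requiring care — and the only real obstacle — is the gap between $\adjeq$ and $\adj$: the definition of a path demands \emph{strict} adjacency of consecutive terms, whereas the hypothesis delivers only adjacency-or-equality. I would resolve this by deleting from the sequence $f(x_0),\ldots,f(x_m)$ any term equal to its immediate predecessor; the resulting subsequence still runs from $a$ to $b$ within $f(X')$, and now every pair of consecutive terms is strictly $\lambda$-adjacent. This is a genuine $\lambda$-path joining $a$ and $b$ in $f(X')$, so $f(X')$ is $\lambda$-connected. As $X'$ was arbitrary, $f$ is $(\kappa,\lambda)$-continuous, completing the equivalence.
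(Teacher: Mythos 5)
Your proof is correct. Note that the present paper states this theorem without proof, merely citing~\cite{Bx99}; your argument is essentially the standard one from that source, splitting into the two implications, using the two-point set $\{x,y\}$ for the forward direction, and---the key point you rightly flag---collapsing consecutive equal terms of the image sequence to convert the $\adjeq_{\lambda}$ guarantees into a genuine path with strict $\lambda$-adjacency for the reverse direction.
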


\begin{thm}
\label{composition}
{\rm \cite{Bx99}}
Let $f: (X, \kappa) \to (Y, \lambda)$ and
$g: (Y, \lambda) \to (Z, \mu)$ be continuous 
functions between digital images. Then
$g \circ f: (X, \kappa) \to (Z, \mu)$ is continuous.
\end{thm}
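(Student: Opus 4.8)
The plan is to prove continuity of $g \circ f$ directly from the definition in terms of connected subsets, which makes the result nearly immediate. Let $X'$ be an arbitrary $\kappa$-connected subset of $X$. First I would invoke the continuity of $f$ to conclude that $f(X')$ is a $\lambda$-connected subset of $Y$. Then, regarding $f(X')$ as a $\lambda$-connected subset of $Y$, I would invoke the continuity of $g$ to conclude that $g(f(X'))$ is a $\mu$-connected subset of $Z$. Since $g(f(X')) = (g \circ f)(X')$ and $X'$ was arbitrary, every $\kappa$-connected subset of $X$ has $\mu$-connected image under $g \circ f$, which is precisely the definition of $(\kappa,\mu)$-continuity.

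An alternative is to argue via the adjacency characterization proved just above: fix $x,y \in X$ with $x \adj_{\kappa} y$ and show $(g \circ f)(x) \adjeq_{\mu} (g \circ f)(y)$. Applying that characterization to $f$ gives $f(x) \adjeq_{\lambda} f(y)$. The one point requiring care is that this is the reflexive closure $\adjeq_{\lambda}$, not strict adjacency, so I cannot feed it straight into the characterization of $g$. I would therefore split into cases: if $f(x) = f(y)$ then $(g \circ f)(x) = (g \circ f)(y)$, so the conclusion holds by reflexivity; if instead $f(x) \adj_{\lambda} f(y)$, then the characterization applied to $g$ yields $(g \circ f)(x) \adjeq_{\mu} (g \circ f)(y)$. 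In both cases the adjacency condition holds, and the converse direction of the characterization then gives continuity of $g \circ f$.

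I expect the direct argument to present no real obstacle: the only thing to observe is the set equality $g(f(X')) = (g \circ f)(X')$, a standard fact about images under composition. In the alternative route, the sole subtlety is the equality case noted above; overlooking it would leave a genuine gap, since $g$ is only assumed to send strictly adjacent points to points that are adjacent \emph{or equal}. For this reason I would present the connected-subset argument as the main proof, since it sidesteps the case analysis entirely and follows the definition of continuity verbatim.
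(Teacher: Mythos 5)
Your proposal is correct. Note that the paper itself offers no proof of this theorem --- it is quoted from the reference [Bx99] --- so there is no in-paper argument to compare against; but your main argument is exactly the proof that the paper's definition of continuity invites: the image of a $\kappa$-connected set under $f$ is $\lambda$-connected, its image under $g$ is $\mu$-connected, and $(g \circ f)(X') = g(f(X'))$, so $g \circ f$ is $(\kappa,\mu)$-continuous. Your alternative route via the adjacency characterization is also sound, and you correctly identified its one pitfall: since $f(x) \adjeq_{\lambda} f(y)$ allows $f(x) = f(y)$, the characterization for $g$ cannot be applied blindly, and the equality case must be split off (where the conclusion holds trivially because $g(f(x)) = g(f(y))$). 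Either argument would serve as a complete proof; the connected-subset one is the cleaner of the two for precisely the reason you give.
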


We use $1_X$ to denote the identity function on $X$, and $C(X,\kappa)$ for
the set of functions $f: X \to X$ that are $\kappa$-continuous.

A {\em fixed point} of a function $f: X \to X$ is a
point $x \in X$ such that $f(x) = x$. 

Functions $f,g: X \to X$ are {\em commuting} if $f(g(x))=g(f(x))$ for all $x \in X$.

\subsection{Digital metric spaces}
\label{DigMetSp}
A {\em digital metric space}~\cite{EgeKaraca15} is a triple
$(X,d,\kappa)$, where $(X,\kappa)$ is a digital image and $d$ is a metric on $X$. 
We are not convinced that
this is a notion worth developing; under conditions 
in which a digital image models a ``real world" image, 
$X$ is finite or $d$ is (usually) an $\ell_p$ metric, so that 
$(X,d)$ is discrete as a topological space. Typically,
assertions in the literature do not make use of both
$d$ and $\kappa$,
so that this notion has an artificial feel. E.g., for a
discrete topological space, all self-maps are continuous,
although on digital images, many self-maps are not digitally continuous.

We say a sequence $\{x_n\}_{n=0}^{\infty}$ is {\em eventually constant} if for some $m>0$, $n>m$
implies $x_n=x_m$.

\begin{prop}
{\em \cite{Han16,BxSt19}}
\label{eventuallyConst}
Let $(X,d, \kappa)$ be a digital metric space. If for some $a>0$ and all
distinct $x,y \in X$ we have $d(x,y) > a$, then any Cauchy sequence in $X$
is eventually constant, and $(X,d)$ is a complete metric space.
\end{prop}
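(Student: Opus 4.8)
The plan is to establish the two conclusions in sequence, deriving completeness as an immediate consequence of the eventual-constancy property. The guiding observation is that the hypothesis forces any two distinct points of $X$ to be separated by more than the fixed gap $a$, and this uniform separation is simply incompatible with the clustering behavior that the Cauchy condition demands. Note that the adjacency $\kappa$ plays no role whatsoever in the argument, which is itself in keeping with the paper's broader theme.

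First I would show that an arbitrary Cauchy sequence $\{x_n\}_{n=0}^{\infty}$ in $(X,d)$ is eventually constant. Instantiating the definition of Cauchy at the particular value $\epsilon = a$, there is an index $N$ such that $d(x_m, x_n) < a$ whenever $m,n > N$. I would then argue by contradiction: were $x_m \neq x_n$ for some $m,n > N$, the hypothesis would give $d(x_m, x_n) > a$, contradicting $d(x_m, x_n) < a$. Hence $x_m = x_n$ for all $m,n > N$. Setting $m_0 = N+1$, which satisfies $m_0 > 0$, we conclude that $n > m_0$ implies $x_n = x_{m_0}$, which is precisely the stated definition of eventually constant.

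For the completeness claim, I would observe that an eventually constant sequence converges to its eventual value, and that this value lies in $X$. Concretely, with $m_0$ as above, for every $\epsilon > 0$ and every $n > m_0$ we have $d(x_n, x_{m_0}) = 0 < \epsilon$, so $\{x_n\}$ converges to $x_{m_0} \in X$. Since by the first part every Cauchy sequence is eventually constant, every Cauchy sequence converges to a point of $X$, and therefore $(X,d)$ is complete.

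I do not expect any genuine obstacle here; the proof is essentially a one-line application of the metric axioms. The only points requiring care are the correct instantiation of the Cauchy definition at $\epsilon = a$ and the verification that the limiting value $x_{m_0}$, being a term of the sequence, is automatically an element of $X$, which is exactly what completeness requires.
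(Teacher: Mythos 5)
Your proof is correct, and it is the standard argument: the paper itself states this proposition without proof (citing \cite{Han16,BxSt19}), and the intended justification is exactly your observation that taking $\epsilon = a$ in the Cauchy condition forces all sufficiently late terms to coincide, whence every Cauchy sequence converges to its eventual value in $X$. Your handling of the paper's specific definition of ``eventually constant'' (producing an index $m_0 > 0$ with $x_n = x_{m_0}$ for $n > m_0$) and your remark that the adjacency $\kappa$ plays no role are both accurate.
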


Note that the hypotheses of Proposition~\ref{eventuallyConst} are satisfied
if $X$ is finite or if $d$ is an $\ell_p$ metric.

\subsection{Common conditions, limitations, and trivialities}
In this section, we state results that limit or trivialize several of the assertions discussed later
in this paper.

Although there are papers that discuss infinite digital images, a ``real world" digital
image is a finite set. Further, most authors writing about a digital metric space
choose their metric from the Euclidean metric, the Manhattan metric, or some other $\ell_p$ metric. 

Other frequently used conditions:
\begin{itemize}
    \item The adjacencies most often used in the digital topology literature are the $c_u$ adjacencies.
    \item Functions that attract the most interest in the digital topology literature are digitally continuous.
\end{itemize}
Thus, the use of $c_u$-adjacency and the continuity assumption (as well as the assumption
of an $\ell_p$ metric) in the following
Proposition~\ref{shrinkage} should not be viewed as major restrictions.
The following is taken from the proof of Remark~5.2 of~\cite{BxSt19}.
\begin{prop}
\label{shrinkage}
Let $X$ be $c_u$-connected. Let $T \in C(X,c_u)$. Let $d$ be an
$\ell_p$ metric on $X$, and $0< \alpha < \frac{1}{u^{1/p}}$.
Let $S: X \to X$ such that $d(S(x), S(y)) \le \alpha d(T(x),T(y))$ for all $x,y \in X$.
Then~$S$ must be a constant function.
\end{prop}

We discuss several results in the literature that are trivial when Proposition~\ref{shrinkage} is
applicable.

The notions of convergent sequence and complete digital metric space are often trivial, e.g., if the
digital image is finite or the metric used is an $\ell_p$ metric, as noted in the following.

\begin{thm}
{\rm \cite{Han16}}
\label{HanTrivialization}
Let $(X,d,\kappa)$ be a digital metric space. If there is a constant $c>0$ such that
for all distinct $x,y \in X$ we have $d(x,y) > c$, then any Cauchy sequence in $X$ is
eventually constant, and $(X,d)$ is a complete metric space.
\end{thm}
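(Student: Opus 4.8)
The plan is to observe first that this statement is identical in content to Proposition~\ref{eventuallyConst} (with the constant $c$ playing the role of $a$ there), so the same short argument applies; listing it separately mainly documents the overlap. The underlying strategy is to convert the separation hypothesis into a rigidity statement: because distinct points are bounded away from one another by at least $c$, the Cauchy condition cannot produce genuinely distinct terms far out in the sequence, forcing an eventually constant tail.

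Concretely, I would let $\{x_n\}_{n=0}^{\infty}$ be an arbitrary Cauchy sequence in $(X,d)$ and apply the definition of Cauchy with the particular choice $\varepsilon = c$. This yields an index $N$ such that $d(x_m,x_n) < c$ whenever $m,n > N$. I would then invoke the contrapositive of the hypothesis: since $d(x,y) > c$ for all distinct $x,y$, the inequality $d(x_m,x_n) < c$ forces $x_m = x_n$. Hence $x_m = x_n$ for all $m,n > N$, and taking $m = N+1$ shows that $x_n = x_{N+1}$ for every $n > N+1$. By the definition of \emph{eventually constant} given earlier in the excerpt, the sequence is therefore eventually constant.

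For the completeness claim I would note that an eventually constant sequence converges to its eventual value: once the tail is constant, it trivially satisfies the definition of convergence. Consequently every Cauchy sequence in $X$ converges in $X$, which is exactly the assertion that $(X,d)$ is a complete metric space.

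I do not expect any genuine obstacle here. The only points requiring care are the bookkeeping in matching the chosen $\varepsilon$ to the separation constant and the verification that the resulting tail fits the stated form of ``eventually constant.'' In that sense the theorem is less a new result than a restatement of Proposition~\ref{eventuallyConst}, and one could equally well prove it by citing that proposition directly.
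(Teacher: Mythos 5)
Your proof is correct and complete: choosing $\varepsilon = c$ in the Cauchy condition forces all terms beyond some index to coincide (by the contrapositive of the separation hypothesis), and an eventually constant sequence trivially converges, giving completeness. The paper itself offers no proof to compare against---it states this theorem as a citation of Han's paper, just as it states the essentially identical Proposition~\ref{eventuallyConst} without proof---so your argument supplies exactly the standard reasoning the paper leaves implicit, and your observation that the theorem is a restatement of Proposition~\ref{eventuallyConst} (with $c$ in place of $a$) is accurate.
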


\begin{remarks}
Notice that any $\ell_p$ metric satisfies the hypothesis of
Theorem~\ref{HanTrivialization}. Therefore,
in the rest of this paper, wherever we assume $d$ is an
$\ell_p$ metric, we could use instead the more general
hypothesis of Theorem~\ref{HanTrivialization}.
\end{remarks}

\begin{cor}
\label{converge}
Let $(X,d,\kappa)$ be a digital metric space, where $X$ is finite or $d$ is as in Theorem~\ref{HanTrivialization}.
Let $f \in C(X,\kappa)$. Then if $\{x_n\}_{n=1}^{\infty} \subset X$ and 
$\lim_{n \to \infty} x_n = x_0 \in X$, then for almost all $n$, $f(x_n) = f(x_0)$.
\end{cor}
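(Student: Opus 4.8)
The plan is to show that under either hypothesis the convergent sequence $\{x_n\}$ is eventually equal to $x_0$; the stated conclusion about $f$ then follows immediately, and in fact the continuity of $f$ plays no essential role.

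First I would unify the two cases by producing a constant $c>0$ such that any two distinct points of $X$ lie at distance greater than $c$. When $d$ is as in Theorem~\ref{HanTrivialization}, such a constant is given directly. When $X$ is finite (and $|X| \ge 2$; the case $|X| \le 1$ makes the statement trivial), I would set $c = \frac{1}{2}\min\{d(x,y) : x,y \in X,\ x \ne y\}$, which is positive because it is half the minimum of a finite set of positive reals. Thus in both cases the hypothesis of Theorem~\ref{HanTrivialization} is in force.

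Next, since $\lim_{n\to\infty} x_n = x_0$, I would apply the definition of convergence with $\varepsilon = c$ to obtain an index $N$ with $d(x_n, x_0) < c$ for all $n > N$. But the previous step guarantees $d(x_n, x_0) > c$ whenever $x_n \ne x_0$, so for every $n > N$ we are forced to have $x_n = x_0$. Hence $\{x_n\}$ is eventually constant with value $x_0$, and therefore $f(x_n) = f(x_0)$ for all $n > N$, i.e., for almost all $n$.

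I expect no genuine obstacle here; the result is essentially a restatement of the trivialization already recorded in Theorem~\ref{HanTrivialization}. The only points requiring care are the reduction of the finite case to that theorem's hypothesis, and the observation that the hypothesis $f \in C(X,\kappa)$ is not actually used: once the sequence is eventually equal to $x_0$, the equality $f(x_n)=f(x_0)$ holds for any self-map whatsoever. As an alternative to the explicit $\varepsilon$-argument, one could instead note that a convergent sequence is Cauchy, invoke Theorem~\ref{HanTrivialization} to conclude it is eventually constant, and identify its eventual value with the limit $x_0$.
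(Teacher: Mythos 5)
Your proof is correct and matches the paper's intended argument: the paper states Corollary~\ref{converge} without an explicit proof, as an immediate consequence of Theorem~\ref{HanTrivialization} (a convergent sequence is Cauchy, hence eventually constant, and its eventual value must be $x_0$), which is exactly what your $\varepsilon = c$ argument establishes. Your two side observations --- that the finite case reduces to the hypothesis of Theorem~\ref{HanTrivialization} by taking $c$ to be half the minimum pairwise distance, and that the hypothesis $f \in C(X,\kappa)$ is never actually used --- are both accurate and fully in line with the paper's theme that such statements trivialize in this setting.
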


Other choices of $(X,d)$ need not lead to the conclusion of Corollary~\ref{converge},
as shown by the following example.

\begin{exl}
\label{nonStdMetric}
Let $X = \N \cup \{0\}$, 
\[ d(x,y) = \left \{ \begin{array}{ll}
            0 & \mbox{if } x=0=y; \\
            1/x & \mbox{if } x \neq 0 = y; \\
            1/y & \mbox{if } x = 0 \neq y; \\
            |1/x - 1/y| & \mbox{if } x \neq 0 \neq y.
            \end{array} \right .
\]
Then $d$ is a metric, and $\lim_{n \to \infty} d(n,0) = 0$. However,
the function $f(n)=n+1$ satisfies $f \in C(X,c_1)$ and
\[ \lim_{n \to \infty} d(0, f(n)) = 0 \neq f(0).
\]
\end{exl}

\begin{proof}
Example~2.10 of~\cite{BxSt19} notes that $d$ has the properties of a metric for values of $X \setminus \{0\}$.
Since clearly $d(0,0)=0$ and $d(x,0)=d(0,x)$, we must show that the triangle inequality holds when $0$
is one of the points considered. We have the following.
\begin{itemize}
    \item If $x,y>0$ and $1/x \ge 1/y$ then $d(0,y) = 1/y \le 1/x + |1/x - 1/y| = d(0,x) + d(x,y)$.
    \item If $x,y>0$ and $1/x < 1/y$ then $d(0,y) = 1/y = 1/x + |1/x - 1/y| = d(0,x) + d(x,y)$.
    \item Similarly, if $x,y>0$ then $d(x,0) \le d(x,y) + d(y,0)$.
    \item If $x,y>0$ then $d(x,y)=|1/x - 1/y| < 1/x + 1/y = d(x,0)+d(0,y)$.
\end{itemize}
Thus, the triangle inequality is satisfied.

Note $f \in C(X,c_1)$, $\lim_{n \to \infty}d(n,0) = 0$, and
$\lim _{n \to \infty}d(f(n), f(0)) = 1$.
\end{proof}

\section{Compatible maps and weakly compatible maps}
\label{compatSection}
The papers~\cite{Jyoti-Rani-Rani17b,Rani-Jyoti-Rani, EgeEtal}
discuss common fixed points of compatible and weakly compatible maps 
(the latter also know as ``coincidentally commuting") and related notions.

\subsection{Compatibility - definition and basic properties}
\begin{definition}
{\em \cite{DalalEtal}}
\label{compatDef}
Suppose $S$ and $T$ are self-maps on a digital metric space $(X,d,\kappa)$.
Consider $\{x_n\}_{n=1}^{\infty} \subset X$ such that
\begin{equation}
    \label{compatSeq}
    \lim_{n \to \infty} S(x_n) = \lim_{n \to \infty} T(x_n) = t \in X.
\end{equation}
If every sequence satisfying~{\rm(\ref{compatSeq})} also satisfies
$\lim_{n \to \infty} d(S(T(x_n)), T(S(x_n))) = 0$, then $S$ and $T$ are
{\em compatible maps}.
\end{definition}

\begin{prop}
\label{commuteImpliesCompatible}
Let $f,g: X \to X$ be commuting functions on a digital metric space $(X,d,\kappa)$.
Then $f$ and $g$ are compatible.
\end{prop}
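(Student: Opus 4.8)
The plan is to show directly that commuting functions satisfy the defining condition of compatibility from Definition~\ref{compatDef}. The key observation is that commutativity, $f(g(x)) = g(f(x))$ for all $x \in X$, makes the quantity $d(f(g(x_n)), g(f(x_n)))$ that appears in the compatibility condition equal to $d(f(g(x_n)), f(g(x_n))) = 0$ for \emph{every} $x_n$, not merely in the limit. Since a metric satisfies $d(z,z) = 0$, every term of the sequence whose limit must be checked is already zero.

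Concretely, first I would let $\{x_n\}_{n=1}^{\infty} \subset X$ be an arbitrary sequence satisfying the hypothesis of the compatibility definition, namely $\lim_{n \to \infty} f(x_n) = \lim_{n \to \infty} g(x_n) = t$ for some $t \in X$. Then, to verify that $f$ and $g$ are compatible, I must confirm that $\lim_{n \to \infty} d(f(g(x_n)), g(f(x_n))) = 0$. Using the commuting hypothesis applied at each point $x_n$, I have $f(g(x_n)) = g(f(x_n))$, hence $d(f(g(x_n)), g(f(x_n))) = d(f(g(x_n)), f(g(x_n))) = 0$ for every $n$. A constant sequence of zeros trivially has limit $0$, which is exactly the condition required.

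I would note that this argument makes no use at all of the convergence hypothesis $\lim f(x_n) = \lim g(x_n) = t$; the conclusion holds for every sequence whatsoever, so in particular for every sequence satisfying~(\ref{compatSeq}). This reflects the general principle that commutativity is a substantially stronger condition than compatibility, which is the point of the proposition.

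There is essentially no obstacle here, since the result is a direct unwinding of the definitions. The only subtlety worth stating explicitly is the metric axiom $d(z,z)=0$, which is what converts the pointwise equality $f(g(x_n)) = g(f(x_n))$ into a vanishing distance; I would mention this to keep the argument self-contained.
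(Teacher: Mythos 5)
Your proof is correct and is simply the fully spelled-out version of the paper's argument, which states that the assertion is immediate from commutativity (exactly because $f(g(x_n))=g(f(x_n))$ forces every distance in the compatibility condition to be $0$). Nothing further is needed.
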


\begin{proof}
Since $f$ and $g$ are commuting, the assertion is immediate.
\end{proof}

\begin{definition}
{\rm \cite{Jyoti-Rani-Rani17b}}
\label{EAdef}
Let $S$ and $T$ be self-maps on a digital metric space $(X,d,\kappa)$.
The pair $(S,T)$ satisfies the property {\em E.A.} if there exists a sequence
$\{x_n\}_{n=1}^{\infty} \subset X$ that satisfies~(\ref{compatSeq}).
\end{definition}

\begin{definition}
{\rm \cite{Jyoti-Rani-Rani17b}}
\label{CLRTdef}
Let $S$ and $T$ be self-maps on a digital metric space $(X,d,\kappa)$.
The pair $(S,T)$ satisfies the property {\em common limit in the range of $T$ (CLRT)}  
if there exists a sequence $\{x_n\}_{n=1}^{\infty} \subset X$ that satisfies
\[ \lim_{n \to \infty} S(x_n) = \lim_{n \to \infty} T(x_n) = T(x) \mbox{ for some } x \in X.
\]
\end{definition}

\begin{prop}
\label{EAequivCLRT}
Let $S$ and $T$ be self-maps on a digital metric space $(X,d,\kappa)$.
\begin{itemize}
    \item Suppose the pair $(S,T)$ satisfies the property CLRT. Then the pair $(S,T)$ 
          satisfies the property E.A.
    \item Suppose the pair $(S,T)$ satisfies the property E.A. If $X$ is finite,
          then $(S,T)$ satisfies the property CLRT.
\end{itemize}
\end{prop}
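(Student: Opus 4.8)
The plan is to treat the two bullets separately: the first is essentially immediate from the definitions, while the second rests on the eventually-constant behavior of convergent sequences guaranteed by Proposition~\ref{eventuallyConst}.

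For the first bullet, I would argue directly. If the pair $(S,T)$ satisfies CLRT, there is a sequence $\{x_n\}_{n=1}^{\infty}$ with $\lim_{n \to \infty} S(x_n) = \lim_{n \to \infty} T(x_n) = T(x)$ for some $x \in X$. Since $T(x) \in X$, setting $t = T(x)$ shows that this same sequence satisfies~(\ref{compatSeq}). Hence $(S,T)$ satisfies property E.A., and no hypothesis on $X$ is needed for this direction.

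For the second bullet, suppose $X$ is finite and that $(S,T)$ satisfies E.A., witnessed by a sequence $\{x_n\}_{n=1}^{\infty}$ with $\lim_{n \to \infty} S(x_n) = \lim_{n \to \infty} T(x_n) = t \in X$. The key observation is that finiteness of $X$ forces $t$ to lie in the range of $T$. Since $X$ is finite, the quantity $c = \min\{d(a,b) : a,b \in X,\ a \neq b\}$ is positive, so taking $a = c/2$ the hypothesis of Proposition~\ref{eventuallyConst} is satisfied. A convergent sequence is Cauchy, so the sequence $\{T(x_n)\}$, which lies in $X$ and converges to $t$, is eventually constant; its eventual value must coincide with its limit $t$. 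Thus $T(x_n) = t$ for all sufficiently large $n$, and in particular $t = T(x)$ for $x = x_N$ with $N$ chosen large. The same sequence $\{x_n\}$ then satisfies $\lim_{n \to \infty} S(x_n) = \lim_{n \to \infty} T(x_n) = t = T(x)$, which is exactly property CLRT.

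I expect the only real subtlety to lie in the second bullet's use of finiteness: it is finiteness, via Proposition~\ref{eventuallyConst}, that guarantees the common limit $t$ is genuinely attained as a value $T(x)$ rather than merely approximated. Without this, $t$ need not belong to the range of $T$, so CLRT could fail even when E.A. holds; this is the one place where the hypothesis cannot be dropped.
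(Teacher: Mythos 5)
Your proof is correct, and the first bullet matches the paper's (both treat it as immediate from the definitions, taking $t = T(x)$). For the second bullet, however, you and the paper use finiteness through different mechanisms. The paper works on the domain side: since $X$ is finite, some value $x \in X$ is taken by $\{x_n\}$ infinitely often, giving a constant subsequence $\{x_{n_i}\}$ with $x_{n_i} = x$; then $T(x) = \lim_{i \to \infty} T(x_{n_i}) = \lim_{n \to \infty} T(x_n) = t$ by uniqueness of limits, so the common limit lies in the range of $T$. You work on the image side: finiteness gives a positive lower bound on distances between distinct points, so Proposition~\ref{eventuallyConst} makes the convergent sequence $\{T(x_n)\}$ eventually constant with eventual value $t$, whence $t = T(x_N)$ for large $N$. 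Both arguments are short and sound, but they buy slightly different things. Your route needs only the uniform-discreteness hypothesis of Proposition~\ref{eventuallyConst} (equivalently Theorem~\ref{HanTrivialization}), not finiteness itself, so it proves without change that E.A.\ implies CLRT whenever distinct points of $X$ are at distance at least some $a > 0$ --- for instance when $d$ is an $\ell_p$ metric on an infinite $X$ --- a case the paper's pigeonhole argument does not reach. The paper's route, by contrast, uses no metric estimates at all beyond uniqueness of limits, only the combinatorial fact that a sequence in a finite set has a constant subsequence. Your closing observation about where finiteness is essential is also apt: without some such hypothesis the limit $t$ need not be attained as a value of $T$, and E.A.\ does not imply CLRT in general.
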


\begin{proof}
Suppose the pair $(S,T)$ satisfies the property CLRT. It is trivial that 
the pair $(S,T)$ satisfies the property E.A.

Suppose the pair $(S,T)$ satisfies the property E.A. Let 
$\{x_n\}_{n=1}^{\infty} \subset X$ satisfy~(\ref{compatSeq}). Since $X$ is finite,
there is a subsequence $\{x_{n_i}\}$ such that $x_{n_i}$ is eventually constant; say,
$x_{n_i}$ is eventually equal to $x \in X$. Hence $T(x_n)$ is eventually
$\lim_{n_i \to \infty}T(x_{n_i}) = T(x)$. Thus $(S,T)$ satisfies the property CLRT.
\end{proof}

\subsection{Variants on compatibility}
In classical topology and real analysis, there are many papers that study
variants of compatible (as defined above) functions.
Several authors have studied analogs of these variants in digital topology.
Often, the variants turn out to be equivalent.

\begin{definition}
\label{WeaklyCompatDef}
{\rm \cite{Dalal17}}
Let $S,T: X \to X$. Then $S$ and $T$ are
{\em weakly compatible} or 
{\em coincidentally commuting} if, for every 
$x \in X$ such that $S(x)=T(x)$ we have 
$S(T(x)) = T(S(x))$.
\end{definition}

\begin{thm}
\label{compatWeakCompat}
Let $S,T: X \to X$. Compatibility implies weak compatibility; and if $X$ is 
finite, weak compatibility implies compatibility.
\end{thm}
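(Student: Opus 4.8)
The plan is to prove the two implications separately, each by reducing to the defining condition of the target notion.

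For the forward direction (compatibility implies weak compatibility), I would take an arbitrary coincidence point, i.e., any $x \in X$ with $S(x) = T(x) =: t$, and feed the \emph{constant} sequence $x_n = x$ into Definition~\ref{compatDef}. This sequence trivially satisfies~(\ref{compatSeq}), since $\lim_{n} S(x_n) = S(x) = t = T(x) = \lim_{n} T(x_n)$. Compatibility then forces $\lim_{n} d(S(T(x_n)), T(S(x_n))) = 0$; but every term of this sequence equals the single value $d(S(T(x)), T(S(x)))$, so that value must be $0$, giving $S(T(x)) = T(S(x))$. Since $x$ was an arbitrary coincidence point, $S$ and $T$ are weakly compatible. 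Note that this direction uses nothing about $X$ being finite.

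For the converse (with $X$ finite), the key observation is that convergence in a finite metric space is eventually constant: taking $c = \min\{\, d(p,q) : p,q \in X,\ p \neq q \,\} > 0$, Proposition~\ref{eventuallyConst} (equivalently Theorem~\ref{HanTrivialization}) applies. Given any sequence $\{x_n\}$ satisfying~(\ref{compatSeq}), both $\{S(x_n)\}$ and $\{T(x_n)\}$ converge to $t$, hence each is eventually equal to $t$. Therefore, for all sufficiently large $n$, we have $S(x_n) = t = T(x_n)$, so each such $x_n$ is a coincidence point of $S$ and $T$. Weak compatibility then yields $S(T(x_n)) = T(S(x_n))$, i.e., $d(S(T(x_n)), T(S(x_n))) = 0$, for all large $n$; hence $\lim_{n} d(S(T(x_n)), T(S(x_n))) = 0$. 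This is exactly the condition required for compatibility in Definition~\ref{compatDef}.

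The only place finiteness is genuinely needed is in the converse, where it guarantees the eventually-constant behavior of the convergent sequences $\{S(x_n)\}$ and $\{T(x_n)\}$; I expect this step to be the main (though modest) obstacle. It is precisely the property that can fail for a general metric, as in Example~\ref{nonStdMetric}, which is why the finiteness hypothesis cannot simply be dropped in the second implication.
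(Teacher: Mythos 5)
Your proof is correct and follows essentially the same route as the paper's: the constant-sequence trick at a coincidence point for the forward implication, and, for the converse, eventual constancy of the convergent sequences $\{S(x_n)\}$ and $\{T(x_n)\}$ guaranteed by finiteness (Theorem~\ref{HanTrivialization}/Proposition~\ref{eventuallyConst}), followed by an application of weak compatibility. If anything, your converse is slightly cleaner than the paper's: the paper additionally extracts a constant subsequence $x_{n_i}=y$ and applies weak compatibility at $y$, a step your argument shows to be unnecessary, since once $S(x_n)=t=T(x_n)$ each such $x_n$ is itself a coincidence point to which weak compatibility applies directly.
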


\begin{proof}
Suppose $S$ and $T$ are compatible. We show they are weakly compatible 
as follows. Let $S(x)=T(x)$ for some $x \in X$. Let
$x_n=x$ for all $n \in \N$. Then 
\[ lim_{n \to \infty} S(x_n) = S(x) = T(x) = lim_{n \to \infty} T(x_n).
\]
By compatibility,
\[ 0 = \lim_{n \to \infty} d(S(T(x_n)), T(S(x_n))) = d(S(T(x)), T(S(x))).
\]
Thus, $S$ and $T$ are weakly compatible.

Suppose $S$ and $T$ are weakly compatible and $X$ is finite. We show $S$ and $T$ are compatible 
as follows.
Let $\{x_n\}_{n=1}^{\infty} \subset X$ such that
\[\lim_{n \to \infty} S(x_n) = \lim_{n \to \infty} T(x_n) = t \in X.
\]
Theorem~\ref{HanTrivialization} yields that for almost all~$n$, $S(x_n)=T(x_n) = t$. 
Since $X$ is finite, there is an infinite subsequence $\{x_{n_i}\}$ of
$\{x_n\}_{n=1}^{\infty}$ such that $x_{n_i} = y \in X$, hence $S(y)=T(y)$. 
Therefore, for almost all~$n$ and almost all $n_i$, weak compatibility implies
\[ S(T(x_n)) = S(T(x_{n_i})) = S(T(y)) = T(S(y)) = T(S(x_{n_i})) = T(S(x_n)).
\]
It follows that $S$ and $T$ are compatible.
\end{proof}

We have the following, in which we
restate~(\ref{compatSeq}) for convenience.

\begin{definition}
\label{compatVariantsDef}
Suppose $S$ and $T$ are self-maps on a digital metric space $(X,d,\kappa)$.
Consider $\{x_n\}_{n=1}^{\infty} \subset X$ such that
\begin{equation}
    \label{repeatedCompatSeq}
    \lim_{n \to \infty} S(x_n) = \lim_{n \to \infty} T(x_n) = t \in X.
\end{equation}
\begin{itemize}
    \item $S$ and $T$ are {\em compatible of type A}~{\rm \cite{DalalEtal}}
           if every sequence satisfying~{\rm(\ref{repeatedCompatSeq})} also satisfies
          \[ \lim_{n \to \infty} d(S(T(x_n)), T(T(x_n))) = 0 = 
          \lim_{n \to \infty} d(T(S(x_n)), S(S(x_n))).
          \]
    \item $S$ and $T$ are {\em compatible of type B}~{\rm \cite{EgeEtal}}
           if every sequence satisfying~{\rm(\ref{repeatedCompatSeq})} also satisfies
          \[ \lim_{n \to \infty} d(S(T(x_n)), T(T(x_n))) \le \]
         \begin{equation} 
         \label{compatB1stIneq}
         1/2~[\lim_{n \to \infty} d(S(T(x_n)), S(t)) + d(S(t), S(S(x_n)))]
          \end{equation}
          and
         \[
         \lim_{n \to \infty} d(T(S(x_n)), S(S(x_n))) \le \]
         \begin{equation} 
         \label{compatB2ndIneq}
          1/2~[\lim_{n \to \infty} d(T(S(x_n)), T(t)) + d(T(t), T(T(x_n)))].
          \end{equation}
          {\em Note this is a correction of the definition as stated in~\cite{EgeEtal},
           where the inequality here given as~(\ref{compatB2ndIneq}) 
           uses a left side equivalent to}
           \[ \lim_{n \to \infty} d(T(S(x_n)), T(T(x_n))) \mbox{ instead of }
           \lim_{n \to \infty} d(T(S(x_n)), S(S(x_n))). \]
           {\em The version we have stated
           is the version used in proofs of~\cite{EgeEtal} and corresponds to the version
           of~\cite{PathakKahn} that inspired the definition of~\cite{EgeEtal}. 
           }
        \item $S$ and $T$ are {\em compatible of type C}~{\rm \cite{EgeEtal}}
           if every sequence satisfying~{\rm(\ref{repeatedCompatSeq})} also satisfies
           \[ \lim_{n \to \infty} d(S(T(x_n)), T(T(x_n))) \le
           \]
           \begin{equation}
               \label{compatC1stIneq}
            1/2~ \left [ \begin{array}{c}\lim_{n \to \infty} d(S(T(x_n)),S(t)) + 
           \lim_{n \to \infty} d(S(t), S(S(x_n))) + \\ \lim_{n \to \infty} d(S(t), T(T(x_n))) \end{array} \right ]
           \end{equation}
           and
           \[
               \lim_{n \to \infty} d(T(S(x_n)), S(S(x_n))) \le 
           \]
           \begin{equation}
               \label{compatC2ndIneq} 1/2~ \left [ \begin{array}{c} \lim_{n \to \infty} d(T(S(x_n)), T(t)) +
            \lim_{n \to \infty} d(T(t), T(T(x_n))) + \\ \lim_{n \to \infty} d(T(t), S(S(x_n))) \end{array} \right ].
           \end{equation}
        \item $S$ and $T$ are {\em compatible of type P}~{\rm \cite{DalalEtal}}
           if every sequence satisfying~{\rm(\ref{repeatedCompatSeq})} also satisfies
           \[ \lim_{n \to \infty} d(S(S(x_n)), T(T(x_n))) = 0.
           \]
\end{itemize}
\end{definition}

We augment Theorem~\ref{compatWeakCompat} with the following.

\begin{thm}
\label{manyEquivs}
Let $(X,d,\kappa)$ be a digital metric space, where $X$ is finite or $d$ is
an $\ell_p$ metric. Let $S,T: X \to X$. The following are equivalent.
\begin{itemize}
    \item $S$ and $T$ are compatible.
    \item $S$ and $T$ are compatible of type A.
    \item $S$ and $T$ are compatible of type B.
    \item $S$ and $T$ are compatible of type C.
    \item $S$ and $T$ are compatible of type P.
\end{itemize}
\end{thm}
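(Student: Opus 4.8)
The plan is to collapse all five notions onto one and the same elementary condition, exploiting the ``eventually constant'' phenomenon of Theorem~\ref{HanTrivialization}. That theorem applies in both cases of the hypothesis: an $\ell_p$ metric satisfies its separation condition by the Remark following it, and if $X$ is finite then $c=\tfrac12\min_{x\neq y}d(x,y)>0$ works.

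First I would establish the single structural fact on which everything rests. Let $\{x_n\}_{n=1}^{\infty}$ be any sequence satisfying~(\ref{repeatedCompatSeq}), so $\lim_{n\to\infty}S(x_n)=\lim_{n\to\infty}T(x_n)=t\in X$. Each of $\{S(x_n)\}$ and $\{T(x_n)\}$ is convergent, hence Cauchy, hence eventually constant by Theorem~\ref{HanTrivialization}; a constant value that is also the limit must equal $t$. Therefore $S(x_n)=T(x_n)=t$ for almost all $n$. Applying $S$ and $T$, for almost all $n$ we obtain
\[ S(S(x_n))=S(T(x_n))=S(t), \qquad T(S(x_n))=T(T(x_n))=T(t). \]
Consequently every ``second order'' distance occurring in Definition~\ref{compatVariantsDef} has an eventually constant argument, so its limit is read off at once: $\lim_{n\to\infty}d(S(T(x_n)),T(S(x_n)))$, $\lim_{n\to\infty}d(S(T(x_n)),T(T(x_n)))$, $\lim_{n\to\infty}d(T(S(x_n)),S(S(x_n)))$, $\lim_{n\to\infty}d(S(S(x_n)),T(T(x_n)))$, $\lim_{n\to\infty}d(S(t),T(T(x_n)))$ and $\lim_{n\to\infty}d(T(t),S(S(x_n)))$ all equal $d(S(t),T(t))$, whereas $\lim_{n\to\infty}d(S(T(x_n)),S(t))$, $\lim_{n\to\infty}d(S(t),S(S(x_n)))$, $\lim_{n\to\infty}d(T(S(x_n)),T(t))$ and $\lim_{n\to\infty}d(T(t),T(T(x_n)))$ all equal $0$.

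Next I would substitute these evaluations into each of the five definitions and see what survives. Compatibility, type A, and type P each become the bare requirement $d(S(t),T(t))=0$, i.e. $S(t)=T(t)$. For type B the two right-hand sides collapse to $\tfrac12(0+0)=0$, so~(\ref{compatB1stIneq}) and~(\ref{compatB2ndIneq}) read $d(S(t),T(t))\le 0$, again forcing $S(t)=T(t)$. For type C the extra summand makes each right-hand side equal to $\tfrac12\,d(S(t),T(t))$, so~(\ref{compatC1stIneq}) and~(\ref{compatC2ndIneq}) read $d(S(t),T(t))\le\tfrac12\,d(S(t),T(t))$, which once more forces $d(S(t),T(t))=0$. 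Thus, evaluated on an arbitrary admissible sequence with limit $t$, every one of the five conditions is equivalent to the single equality $S(t)=T(t)$.

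Finally I would assemble the equivalence. A value $t\in X$ arises as the common limit in~(\ref{repeatedCompatSeq}) exactly when $t=S(x)=T(x)$ for some $x\in X$ (take the constant sequence $x_n=x$ for one direction, and use the eventual constancy above for the other). Hence each of the five notions is equivalent to the common condition
\[ S(t)=T(t)\ \text{ for every } t\in X \text{ with } t=S(x)=T(x) \text{ for some } x\in X, \]
equivalently $S(S(x))=T(S(x))$ at every coincidence point $x$. Since all five reduce to this one statement, they are pairwise equivalent. I expect the only real work to be bookkeeping: correctly matching each limit to the composite it collapses to, and checking that the \emph{inequalities} of types B and C, though not equalities, still pin $d(S(t),T(t))$ to $0$; the finiteness or $\ell_p$ hypothesis enters solely through Theorem~\ref{HanTrivialization} and nowhere else.
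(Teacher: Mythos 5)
Your proof is correct, and it takes a genuinely more self-contained route than the paper's. The paper assembles the theorem from pairwise implications, two of which are outsourced: the equivalence of compatible, type A, and type P is cited from Theorem~3.3 of~\cite{Bx19}, and ``type A implies type B'' is cited from Proposition~4.7 of~\cite{EgeEtal}; only ``B implies compatible,'' ``compatible implies C,'' and ``C implies compatible'' are proved on the spot, each via the same eventual-constancy collapse you use (the paper invokes Proposition~\ref{eventuallyConst}, you invoke Theorem~\ref{HanTrivialization}; they are interchangeable here). You instead apply the collapse uniformly to all five definitions: for any sequence satisfying~(\ref{repeatedCompatSeq}), $S(x_n)=T(x_n)=t$ for almost all $n$, so every limit appearing in Definition~\ref{compatVariantsDef} evaluates to either $0$ or $d(S(t),T(t))$; the equality-style conditions (compatible, A, P) then read $d(S(t),T(t))=0$, the type B inequalities read $d(S(t),T(t))\le 0$, and the type C inequalities read $d(S(t),T(t))\le\frac{1}{2}\,d(S(t),T(t))$, all of which force $S(t)=T(t)$, and conversely all of which hold when $S(t)=T(t)$. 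Since the admissible limits $t$ are exactly the common values $t=S(x)=T(x)$ (constant sequences give one direction, eventual constancy the other), each of the five notions is equivalent to the single condition that $S$ and $T$ agree at $S(x)$ for every coincidence point $x$, which is precisely weak compatibility --- a normal form that also meshes nicely with Theorem~\ref{compatWeakCompat}. What your route buys: no dependence on~\cite{Bx19} or~\cite{EgeEtal}, and a sharper structural conclusion (all five notions collapse to one explicit condition). What the paper's route buys: brevity, by reusing previously established results rather than re-deriving them.
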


\begin{proof}
The equivalence of compatible, compatible of type A, and compatible of type P was
shown in Theorem~3.3 of~\cite{Bx19}.

Compatible of type A implies compatible of type B, by Proposition~4.7
of~\cite{EgeEtal}.

We show compatible of type B implies compatible, as follows. Let $S$ and $T$ be
compatible of type B. Let $\{x_n\}_{n=1}^{\infty} \subset X$ 
satisfy~(\ref{repeatedCompatSeq}). By Proposition~\ref{eventuallyConst}, 
$S(x_n)=t=T(x_n)$ for almost all~$n$. From~(\ref{compatB1stIneq}) we have
\[ d(S(t), T(t)) \le 1/2~[d(S(t),S(t)) + d(S(t),S(t))] = 0, \]
so $S(t) = T(t)$. Thus
\[ \lim_{n \to \infty} d(S(T(x_n)), T(S(x_n))) = \lim_{n \to \infty} d(S(t), T(t))
   = 0.
\]
Therefore, $S$ and $T$ are compatible.

We show compatible implies compatible of type C, as follows. 
Let $S$ and $T$ be compatible. Let $\{x_n\}_{n=1}^{\infty} \subset X$ 
satisfy~(\ref{repeatedCompatSeq}). By Proposition~\ref{eventuallyConst}, 
$S(x_n)=t=T(x_n)$ for almost all~$n$, and by compatibility, $S(t)=T(t)$. Therefore,
\[ \lim_{n \to \infty} d(S(T(x_n)), T(T(x_n))) =
   \lim_{n \to \infty} d(S(t), T(t)) = 0,
\]
so~(\ref{compatC1stIneq}) is satisfied, and
\[ \lim_{n \to \infty} d(T(S(x_n)), S(S(x_n))) = d(T(t), S(t)) = 0,
\]
so~(\ref{compatC2ndIneq}) is satisfied. Thus $S$ and $T$ are
compatible of type C.

We show compatible of type C implies compatible, as follows. Let $S$ and $T$ be
compatible of type C. Let $\{x_n\}_{n=1}^{\infty} \subset X$ 
satisfy~(\ref{repeatedCompatSeq}). By Proposition~\ref{eventuallyConst}, 
$S(x_n)=t=T(x_n)$ for almost all~$n$. From~(\ref{compatC1stIneq}) it follows that
\[ d(S(t), T(t)) \le 1/2~[d(S(t),S(t)) + d(S(t),S(t)) + d(S(t), T(t))],
\]
or
\[ d(S(t), T(t)) \le 1/2~[ 0 + 0 + d(S(t), T(t))],
\]
which implies 
\[ 0 = d(S(t),T(t)) = \lim_{n \to \infty} d(S(T(x_n)), T(S(x_n))).
\]
Therefore, $S$ and $T$ are compatible.
\end{proof}

\subsection{Fixed point assertions of~\cite{Jyoti-Rani-Rani17b}}
\label{JRRfpSubsection}
The following assertion appears as {\bf Theorem~3.1.1 of~\cite{Jyoti-Rani-Rani17b}} 
and as {\bf Theorem~4.12 of~\cite{EgeEtal}} (there is a minor difference between
these: ~\cite{Jyoti-Rani-Rani17b} requires $\mu \in (0,1/2)$ while 
\cite{EgeEtal} requires $\mu \in (0,1)$).
\begin{assert}
\label{JRassert1}
    Let $(X,d,\kappa)$ be a complete digital metric space. Let $S$ and $T$ be compatible self-maps on
    $X$. Suppose
    
    (i) $S(X) \subset T(X)$;
    
    (ii) $S$ or $T$ is continuous; and
    
    (iii) for all $x,y \in X$ and some $\mu \in (0,1/2)$,
    \[ d(Sx,Sy) \le \mu \max \{ d(Tx,Ty), d(Tx,Sx), d(Tx,Sy), d(Ty,Sx), d(Ty,Sy)\}.
    \]
    Then $S$ and $T$ have a unique common fixed point in~$X$.
\end{assert}

\begin{remarks}
The argument given as proof in~\cite{Jyoti-Rani-Rani17b} for this assertion 
clarifies that the continuity assumed is topological (the classical
$\varepsilon - \delta$ continuity), not digital. 
\end{remarks}

Further, Assertion~\ref{JRassert1} and the argument offered for its proof
in~\cite{Jyoti-Rani-Rani17b} are flawed as discussed below (this is the first
of several assertions with related flaws; we discuss these assertions together),
beginning at Remark~\ref{remark1}. Flaws in the treatment of Assertion~\ref{JRassert1}
in~\cite{EgeEtal} are discussed below, beginning at Remark~\ref{EgeEtal4.12}.

The following assertion appears as {\bf Theorem~3.2.1 of~\cite{Jyoti-Rani-Rani17b}}.
\begin{assert}
\label{JRassert2}
    Let $(X,d,\kappa)$ be a complete digital metric space. Let $S$ and $T$ be 
    weakly compatible self-maps on $X$. Suppose
    
    (i) $S(X) \subset T(X)$;
    
    (ii) $S(X)$ or $T(X)$ is complete; and
    
    (iii) for all $x,y \in X$ and some $\mu \in (0,1/2)$,
    \[ d(Sx,Sy) \le \mu \max \{ d(Tx,Ty), d(Tx,Sx), d(Tx,Sy), d(Ty,Sx), d(Ty,Sy) \}.
    \]
    Then $S$ and $T$ have a unique common fixed point in~$X$.
\end{assert}

However, this assertion and the argument offered for its proof are flawed as discussed below, beginning at Remark~\ref{remark1}.

The following is {\bf Theorem~3.3.2 of~\cite{Jyoti-Rani-Rani17b}}.
\begin{thm}
\label{JRassert3}
Let $(X,d,\kappa)$ be a digital metric space. Let $S,T: X \to X$ be weakly compatible maps
satisfying the following.

(i) For some $\mu \in (0,1)$ and all $x,y \in X$, $d(Sx,Sy) \le \mu d(Tx,Ty)$.

(ii) $S$ and $T$ satisfy property E.A.

(iii) $T(X)$ is a closed subspace of $X$.

Then $S$ and $T$ have a unique common fixed point in $X$.
\end{thm}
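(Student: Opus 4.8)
The plan is to run the standard coincidence-to-fixed-point argument associated with property E.A., using hypothesis (iii) to locate a coincidence point and then hypothesis (i) together with weak compatibility to promote it to a common fixed point. First I would invoke property E.A. (Definition~\ref{EAdef}) to obtain a sequence $\{x_n\}$ with $\lim_{n \to \infty} S(x_n) = \lim_{n \to \infty} T(x_n) = t$ for some $t \in X$. Since each $T(x_n) \in T(X)$ and $T(X)$ is closed by (iii), the limit $t$ lies in $T(X)$, so there is some $u \in X$ with $T(u) = t$.

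Next I would show that $u$ is a coincidence point, i.e.\ $S(u) = T(u)$. Applying the contraction (i) to the pair $(x_n, u)$ gives $d(S(x_n), S(u)) \le \mu\, d(T(x_n), T(u)) = \mu\, d(T(x_n), t)$; letting $n \to \infty$ and using joint continuity of $d$ together with $S(x_n) \to t$ yields $d(t, S(u)) \le \mu \cdot 0 = 0$, so $S(u) = t = T(u)$. Weak compatibility (Definition~\ref{WeaklyCompatDef}) then gives $S(t) = S(T(u)) = T(S(u)) = T(t)$, so $t$ is itself a coincidence point of $S$ and $T$.

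To finish I would show $S(t) = t$. Applying (i) to the pair $(t, u)$ gives $d(S(t), S(u)) \le \mu\, d(T(t), T(u))$; since $S(u) = T(u) = t$ and $T(t) = S(t)$, this reads $d(S(t), t) \le \mu\, d(S(t), t)$, which forces $d(S(t),t) = 0$ because $\mu < 1$. Hence $S(t) = t$, and with $S(t) = T(t)$ we get $T(t) = t$, so $t$ is a common fixed point. Uniqueness is immediate from (i): if $t, t'$ are common fixed points then $d(t,t') = d(S(t), S(t')) \le \mu\, d(T(t), T(t')) = \mu\, d(t,t')$, forcing $t = t'$.

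The only delicate point is the limit passage in the second paragraph, which relies on the joint continuity of the metric $d$; every other step is a direct algebraic use of the contraction constant $\mu < 1$. I would also note that under the separation hypothesis of Theorem~\ref{HanTrivialization} (in particular for any $\ell_p$ metric) the argument collapses: by Proposition~\ref{eventuallyConst} the sequences $S(x_n)$ and $T(x_n)$ are eventually equal to $t$, so some $x_N$ is already a coincidence point with $S(x_N) = T(x_N) = t$, and hypothesis (iii) becomes superfluous.
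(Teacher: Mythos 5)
Your proof is correct, and every step is justified by the definitions as given in this paper: property E.A.\ supplies the sequence, closedness of $T(X)$ supplies $u$ with $T(u)=t$, the contraction (i) together with uniqueness of limits (equivalently, continuity of the metric) supplies the coincidence $S(u)=T(u)=t$, weak compatibility turns that into $S(t)=T(t)$, and one more application of (i) with $\mu<1$ gives both the fixed point and uniqueness. Be aware, though, that there is no proof in the paper to compare against: the paper states this result only as a quotation of Theorem~3.3.2 of~\cite{Jyoti-Rani-Rani17b} (it is one of the quoted results the author does not dispute), and the paper's own contribution is the remark that the theorem is \emph{limited} --- if $T \in C(X,c_u)$, $d$ is an $\ell_p$ metric, and $\mu < 1/u^{1/p}$, then Proposition~\ref{shrinkage} forces $S$ to be constant, so the common fixed point is just the constant value of $S$. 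So your write-up supplies an argument where the paper defers to a citation; it is the standard E.A.-to-coincidence-point argument, and it is sound. Your closing observation also complements the paper nicely: the sole role of hypothesis (iii) is to upgrade property E.A.\ to property CLRT (the point $u$ with $T(u)=t$ is exactly what CLRT postulates), and your remark that (iii) becomes superfluous under the separation hypothesis of Theorem~\ref{HanTrivialization} is the metric analogue of the paper's Proposition~\ref{EAequivCLRT}, which draws the same conclusion for finite $X$; in that regime the theorem collapses to its CLRT version, Theorem~\ref{JR3.4.2}.
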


However, this result is limited, as discussed below, beginning at Remark~\ref{remark1}.

The following appears as {\bf Theorem~3.3.3 of~\cite{Jyoti-Rani-Rani17b}}.
\begin{assert}
\label{JRassert4}
 Let $(X,d,\kappa)$ be a complete digital metric space. Let $S$ and $T$ be 
    weakly compatible self-maps on $X$. Suppose
    
    (i) for all $x,y \in X$ and some $\mu \in (0,1/2)$,
    \[ d(Sx,Sy) \le \mu \max \{ d(Tx,Ty), d(Tx,Sx), d(Tx,Sy), d(Ty,Sx), d(Ty,Sy) \};
    \]
    
    (ii) $S$ and $T$ satisfy the property E.A.; and
    
    (iii) $T(X)$ is a closed subspace of $X$.
    
    Then $S$ and $T$ have a unique common fixed point in $X$.
\end{assert}
However, this assertion and the argument offered for its proof are flawed as discussed below, beginning at Remark~\ref{remark1}.

The following is {\bf Theorem~3.4.3 of~\cite{Jyoti-Rani-Rani17b}}.

\begin{thm}
\label{JR3.4.2}
Let $S$ and $T$ be weakly compatible self-maps on a digital metric space $(X,d,\kappa)$
satisfying

(i) for some $\mu \in (0,1)$ and all $x,y \in X$, $d(Sx,Sy) \le \mu d(Tx,Ty)$; and

(ii) the CLRT property.

Then $S$ and $T$ have a unique common fixed point in $X$.
\end{thm}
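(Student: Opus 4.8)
The plan is to run a standard Jungck-type coincidence-and-fixed-point argument, driven entirely by the contraction~(i); the adjacency $\kappa$, and indeed completeness of $(X,d)$, will play no role, because the CLRT property hands us directly a limit that already lies in the range of $T$. This is worth noting, since the nearby Assertions~\ref{JRassert1} and~\ref{JRassert2} assume completeness and Theorem~\ref{JRassert3} assumes $T(X)$ closed, whereas here CLRT makes all such hypotheses unnecessary.

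First I would extract from the CLRT hypothesis a sequence $\{x_n\}$ and a point $x \in X$ with $\lim_{n\to\infty} S(x_n) = \lim_{n\to\infty} T(x_n) = T(x)$. Applying~(i) to the pairs $(x_n,x)$ gives $d(S(x_n),S(x)) \le \mu\, d(T(x_n),T(x))$ for every $n$; letting $n \to \infty$ and using $T(x_n) \to T(x)$ forces $S(x_n) \to S(x)$. Since limits in a metric space are unique and $S(x_n) \to T(x)$, I conclude $S(x) = T(x)$, so $x$ is a coincidence point.

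Next I would exploit weak compatibility. Writing $w = S(x) = T(x)$, the coincidence $S(x)=T(x)$ and Definition~\ref{WeaklyCompatDef} give $S(T(x)) = T(S(x))$, which reads $S(w) = T(w)$. To promote $w$ to a fixed point, apply~(i) to the pair $(w,x)$: since $S(x)=T(x)=w$ and $T(w)=S(w)$, this yields $d(S(w),w) \le \mu\, d(S(w),w)$, and because $\mu \in (0,1)$ we must have $d(S(w),w)=0$, i.e.\ $S(w)=w$. Hence $T(w)=S(w)=w$, and $w$ is a common fixed point of $S$ and $T$.

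Finally, uniqueness follows from the same contraction: if $w$ and $w'$ are common fixed points then $d(w,w') = d(S(w),S(w')) \le \mu\, d(T(w),T(w')) = \mu\, d(w,w')$, forcing $d(w,w')=0$. The only step requiring any care is the passage to the limit that establishes the coincidence point; the remaining steps are one-line contractions. The point worth flagging is conceptual rather than technical: the theorem needs neither completeness nor the $\ell_p$ or finiteness hypotheses used elsewhere in the paper, and the adjacency $\kappa$ is never invoked, so the ``digital'' structure is inert here.
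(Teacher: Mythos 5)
Your proof is correct: the contraction (i) applied to the pairs $(x_n,x)$ forces $S(x_n) \to S(x)$, uniqueness of limits in a metric space gives the coincidence $S(x)=T(x)$, weak compatibility turns this into $S(w)=T(w)$ for $w=S(x)=T(x)$, and the two one-line contraction estimates yield $S(w)=w$ and uniqueness of the common fixed point. Be aware, however, that the paper contains no proof of this statement to compare against: Theorem~\ref{JR3.4.2} is quoted from~\cite{Jyoti-Rani-Rani17b} (its Theorem~3.4.3), and by stating it as a theorem rather than an ``assertion'' the author signals that he accepts the original proof as valid; the paper's sole contribution here is commentary. Your argument is the standard Jungck-type CLRT coincidence argument, and it is sound, so it stands as a legitimate self-contained proof of the result. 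Your closing observations --- that completeness, closedness of $T(X)$, finiteness, and $\ell_p$ hypotheses are all unnecessary, and that the adjacency $\kappa$ never enters --- are accurate and consonant with the paper's general critique of digital metric spaces. But the paper's own point about this theorem is sharper and cuts the other way: in the remark following Remark~\ref{remark1}, it observes that under the hypotheses most workers in digital topology consider natural ($T \in C(X,c_u)$, $d$ an $\ell_p$ metric, $\mu < 1/u^{1/p}$), Proposition~\ref{shrinkage} forces $S$ to be constant, whence the unique common fixed point is simply the constant value of $S$. So the theorem, though true --- as your proof confirms --- is, in the paper's words, ``quite limited'': the inertness of the digital structure that you flag as a conceptual curiosity is precisely the reason the result reduces to near-triviality in the settings of interest.
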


However, this result is quite limited, as discussed below, beginning at Remark~\ref{remark1}.

The following appears as {\bf Theorem~3.4.3 of~\cite{Jyoti-Rani-Rani17b}}.

\begin{assert}
\label{JR3.4.3}
Let $S$ and $T$ be weakly compatible self-maps on a digital metric space $(X,d,\kappa)$
satisfying

(i) for all $x,y \in X$ and some $\mu \in (0,1/2)$,
    \[ d(Sx,Sy) \le \mu \max \{ d(Tx,Ty), d(Tx,Sx), d(Tx,Sy), d(Ty,Sx), d(Ty,Sy) \} \mbox{; and}
    \]
    
(ii) the CLRT property.

Then $S$ and $T$ have a unique common fixed point in $X$.
\end{assert}

However, this assertion and the argument offered for its proof are flawed as discussed below.

\begin{comment}
Definition~\ref{compatDef} does not require functions $S$ and $T$ to satisfy
property E.A., i.e., there may be
no sequence $\{x_n\}_{n=1}^{\infty} \subset X$ satisfying~(\ref{compatSeq}). Similarly,
Definition~\ref{WeaklyCompatDef} allows the possibility that functions $S$ and $T$ are vacuously
weakly compatible, i.e., that there is no $x \in X$ such that $S(x) = T(X)$. We therefore have
the following counterexample to Assertions~\ref{JRassert1} and~\ref{JRassert2} above.

\begin{exl}
\label{vacuousEx}

\end{exl}

The arguments offered as proofs of Assertions~\ref{JRassert1} and~\ref{JRassert2} implicitly
assume that a sequence can be found for $S$ and $T$ satisfying~(\ref{compatSeq}), and an
$x$ can be found such that $S(x)=T(x)$, respectively, and
therefore that compatible functions cannot be vacuously compatible, and weakly
compatible functions cannot be vacuously weakly compatible. Both of these conclusions are
contrary to Example~\ref{vacuousEx}.
\end{comment}

\begin{remarks}
\label{remark1}
Several times in the arguments offered as proofs for
Assertions~\ref{JRassert1}, \ref{JRassert2}, \ref{JRassert4}, and \ref{JR3.4.3},
inequalities appear that seem to confuse ``$\min$" and ``$\max$". E.g., in the argument for
Assertion~\ref{JRassert1}, it is claimed that the right side of the inequality
\[ d(y_n,y_{n+1}) \le \mu \max
  \left \{ \begin{array}{c}
         d(y_{n-1},y_n), ~d(y_{n-1},y_n), ~d(y_{n-1},y_{n+1}), \\
         d(y_n,y_n), ~d(y_n,y_{n+1})
  \end{array} \right \}
\]
is less than or equal to $\mu d(y_{n-1},y_{n+1})$, which would follow if ``$\max$" were
replaced by ``$\min$".
Thus, these assertions as given in~\cite{Jyoti-Rani-Rani17b}
must be regarded as unproven.
\end{remarks}

\begin{remarks}
Further, suppose ``$\min$" is substituted for ``$\max$" so that (iii) in each of the
Assertions~\ref{JRassert1} and~\ref{JRassert2} and (i) in each of
Assertions~\ref{JRassert4} and~\ref{JR3.4.3} becomes
\begin{quote}
    for all $x,y \in X$ and some $\mu \in (0,1/2)$,
    \[ d(Sx,Sy) \le \mu \min \{ d(Tx,Ty), d(Tx,Sx), d(Tx,Sy), d(Ty,Sx), d(Ty,Sy)\}.
    \]
\end{quote}
Then for all $x,y \in X$, $d(Sx,Sy) \le \mu d(Tx,Ty)$.
If $T \in C(X,c_u)$, $d$ is an $\ell_p$ metric, and
$\mu < 1/u^{1/p}$, then by Proposition~\ref{shrinkage}, $S$ is constant. It would then follow from
compatibility (respectively, from weak compatibility) that $S$ and $T$ have a
unique fixed point coinciding with the value of $S$.
\end{remarks}

\begin{remarks}
Similarly, in Theorems~\ref{JRassert3} and~\ref{JR3.4.2}, if $T \in C(X,c_u)$, $d$ is an $\ell_p$ metric, and
$\mu < 1/u^{1/p}$, then by Proposition~\ref{shrinkage}, $S$ is constant. It would then follow from
compatibility (respectively, from weak compatibility) that $S$ and $T$ have a
unique fixed point coinciding with the value of $S$.
\end{remarks}

\begin{comment}
The following may not be true - haven't completed a proof that it is
\begin{prop}
Suppose $(X,d,\kappa)$ is a finite digital metric space. Let $S$ and $T$ be self-maps on $X$
that satisfy E.A. Then $S$ and $T$ are compatible if and only if they are weakly compatible.
\end{prop}

\begin{proof}
Suppose $S$ and $T$ are compatible. Let $x \in X$ be such that $S(x)=T(x)$. Consider the
sequence $x_n=x$ for all $n \in \N$. We have
\[ \lim_{n \to \infty} S(x_n) = S(x)= T(x) = \lim_{n \to \infty} T(x_n) = t \in X.
\]
By Definition~\ref{compatDef},
\[ d(S(T(x)),T(S(x))) = d(S(T(x_n)), T(S(x_n))) \to_{n \to \infty} 0,
\]
so $S$ and $T$ are weakly compatible.

Suppose $S$ and $T$ are weakly compatible. Let $Y=\{x_n\}_{n=1}^\infty \subset X$ be 
such that~(\ref{compatSeq}) is satisfied. Since $X$ is finite, there is an infinite subsequence
$\{x_{n_i}\}$ of $Y$ such that $x_{n_i}=x_0 \in X$ for all $i$. Since $lim_{n \to \infty}S(x_n)$ and
$lim_{n \to \infty}T(x_n)$ exist,
\[ lim_{n \to \infty}S(x_n) = lim_{n_i \to \infty}S(x_{n_i}) \mbox{ and }
   lim_{n \to \infty}T(x_n) = lim_{n_i \to \infty}T(x_{n_i}).
\]
From~(\ref{compatSeq}),
\[ S(x_0) = lim_{n_i \to \infty}S(x_{n_i}) = lim_{n \to \infty}S(x_n) = t = lim_{n \to \infty}T(x_n) =
\]
\[ lim_{n_i \to \infty}T(x_{n_i}) = T(x_0).
\]
By weak compatibility, we have $S(T(x_0)) = T(S(x_0))$. Thus,
\[ lim_{n \to \infty} d(S(T(x_n)), T(S(x_n))) = d(S(t), T(t)) = d(S(
\]
\todo{finish if it's correct}
\end{proof}
\end{comment}

\subsection{Fixed point assertions of~\cite{Rani-Jyoti-Rani}}
\label{RJRfpSubsection}
The following is stated as {\bf Lemma~3.3.5 of~\cite{Rani-Jyoti-Rani}}.
\begin{assert}
\label{RJ3.3.5}
Let $S,T: (X,d,\kappa) \to (X,d,\kappa)$ be compatible.

1) If $S(t)=T(t)$ then $S(T(t)) = T(S(t))$.

2) Suppose $\lim_{n \to \infty} S(x_n) = \lim_{n \to \infty} T(x_n) = t \in X$.
\begin{quote}
          (a) If $S$ is continuous at $t$, $\lim_{n \to \infty} T(S(x_n)) = S(t)$.
          
          (b) If $S$ and $T$ are continuous at $t$, then $S(t)=T(t)$ and $S(T(t)) = T(S(t))$.
\end{quote}
\end{assert}
But the continuity used in the proof of this assertion is topological continuity,
not digital continuity. We observe that if $X$ is finite or
$d$ is an $\ell_p$ metric, then the assumption of continuity need not be stated, as every self-map
on $X$ is continuous in the topological sense, since $(X,d)$ is discrete.

The argument given as proof of this assertion in~\cite{Rani-Jyoti-Rani} depends on the principle 
that $a_n \to a_0$ implies $S(a_n) \to S(a_0)$ if $S$ is continuous at $a_0$, a valid principle
for topological continuity and also for digital continuity 
if $X$ is finite or $d$ is an $\ell_p$ metric and $\kappa$ is a $c_u$ adjacency,
but, as shown in Example~\ref{nonStdMetric}, not generally true for digital continuity. Thus the
assertion must be regarded as unproven.

We can modify this assertion as follows. Notice we do not use a continuity hypothesis, but 
for part 2) we assume $X$ is finite or $d$ is an $\ell_p$ metric.

\begin{lem}
\label{substLemma}
Let $S,T: (X,d,\kappa) \to (X,d,\kappa)$ be compatible.

1) If $S(t)=T(t)$ then $S(T(t)) = T(S(t))$.

2) Suppose $X$ is finite or $d$ is an $\ell_p$ metric.
If 
\[ \lim_{n \to \infty} S(x_n) = \lim_{n \to \infty} T(x_n) = t \in X,
\]
then $\lim_{n \to \infty} T(S(x_n)) = S(t)=T(t)$ and $S(T(t)) = T(S(t))$.
\end{lem}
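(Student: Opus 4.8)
The plan is to obtain part 1) directly from compatibility by means of a constant sequence, and then to reduce part 2) to part 1) using the fact that, under the stated hypothesis, every convergent sequence is eventually constant.

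For part 1), suppose $S(t)=T(t)$. I would apply Definition~\ref{compatDef} to the constant sequence $x_n = t$. Here $\lim_{n\to\infty} S(x_n) = S(t) = T(t) = \lim_{n\to\infty} T(x_n) = t$, so this sequence satisfies~(\ref{compatSeq}). Compatibility then forces $\lim_{n\to\infty} d(S(T(x_n)), T(S(x_n))) = 0$; but $S(T(x_n)) = S(T(t))$ and $T(S(x_n)) = T(S(t))$ are constant in $n$, so $d(S(T(t)), T(S(t))) = 0$ and hence $S(T(t)) = T(S(t))$. This is exactly the compatible-implies-weakly-compatible direction of Theorem~\ref{compatWeakCompat}, so one could alternatively just cite that result at the point $t$.

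For part 2), the key is that the hypothesis ($X$ finite or $d$ an $\ell_p$ metric) makes convergent sequences eventually constant. Since $\{S(x_n)\}$ and $\{T(x_n)\}$ converge, they are Cauchy, so by Proposition~\ref{eventuallyConst} (equivalently Theorem~\ref{HanTrivialization}) we have $S(x_n) = t = T(x_n)$ for almost all $n$. Consequently, for almost all $n$, $T(S(x_n)) = T(t)$ and $S(T(x_n)) = S(t)$, so these composite sequences are eventually constant and $\lim_{n\to\infty} T(S(x_n)) = T(t)$ while $\lim_{n\to\infty} S(T(x_n)) = S(t)$. Compatibility then gives $\lim_{n\to\infty} d(S(T(x_n)), T(S(x_n))) = 0$, which for eventually constant sequences reads $d(S(t), T(t)) = 0$; thus $S(t) = T(t)$. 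This yields $\lim_{n\to\infty} T(S(x_n)) = T(t) = S(t)$, and then part 1) applied at $t$ delivers $S(T(t)) = T(S(t))$.

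The only step where the hypothesis does real work is the passage from convergence to eventual constancy; this is precisely where an arbitrary metric would fail, as illustrated by Example~\ref{nonStdMetric}, in which a convergent sequence need not have eventually constant image under a digitally continuous map. Once eventual constancy is available, no continuity assumption is needed and the remaining manipulations are immediate. I therefore expect no substantive obstacle beyond correctly invoking Proposition~\ref{eventuallyConst} and keeping careful track of the ``almost all $n$'' quantifiers when passing to the composite sequences.
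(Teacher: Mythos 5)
Your proposal is correct and follows essentially the same route as the paper: part 1) via a constant sequence fed into the compatibility definition, and part 2) via eventual constancy of convergent sequences (Proposition~\ref{eventuallyConst}) combined with compatibility and an appeal to part 1). If anything, your part 2) is slightly cleaner than the paper's, which inserts a triangle-inequality step to get $\lim_{n \to \infty} T(S(x_n)) = T(t)$ that eventual constancy already makes immediate; just note the small slip where you wrote the common limit of the constant sequence in part 1) as $t$ when it is $S(t)=T(t)$, which is harmless since (\ref{compatSeq}) only requires the common limit to be some point of $X$.
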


\begin{proof}
We modify the argument of~\cite{Rani-Jyoti-Rani}.

Suppose $S(t)=T(t)$. Let $x_n=t$ for all $n \in \N$. Then $S(x_n)=T(x_n)=S(t)=T(t)$, so
$d(S(T(t)), T(S(t))) = d(S(T(x_n)), T(S(x_n))) \to_{n \to \infty} 0$ by compatibility. This
establishes 1).

Suppose $\lim_{n \to \infty} S(x_n) = \lim_{n \to \infty} T(x_n) = t \in X$.
Since we assume $X$ is finite or $d$ is an $\ell_p$ metric,
we have $S(x_n) = T(x_n) = t$ for almost all $n$.
Therefore, for almost all $n$, the triangle inequality and
compatibility give us
\[d(T(S(x_n)), T(t)) \le d(T(S(x_n)), S(T(x_n))) + d(S(T(x_n)), T(t))  \]
  \[ \to 0 + \lim_{n \to \infty} d(S(T(x_n)), T(S(x_n))) =0,
\]
so $\lim_{n \to \infty} T(S(x_n)) =T(t)$.
Since $X$ is finite or $d$ is an $\ell_p$ metric, by compatibility
we have
\[ d(S(t), T(t)) = \lim_{n \to \infty} d(S(t), T(S(x_n))) =
   \lim_{n \to \infty} d(S(T(x_n)), T(S(x_n))) = 0.
\]
Therefore, $S(t) = T(t)$ and by part 1),
$S(T(t))=T(S(t))$.
\end{proof}

The following is stated as {\bf Theorem 3.3.6 of~\cite{Rani-Jyoti-Rani}}.

\begin{assert}
\label{RJRcompatibleThm}
Let $S$ and $T$ be continuous compatible maps of a complete digital metric space
$(X,d,\kappa)$ to itself. Then $S$ and $T$ have a unique common fixed point in $X$ if
for some $\alpha \in (0,1)$. 
\begin{equation}
    \label{3.1.4condition}
    S(X) \subset T(X) \mbox{ and } d(S(x),S(y)) \le \alpha d(T(x), T(y)) \mbox{ for all } x,y \in X.
\end{equation}
\end{assert}

\begin{remarks}
The argument given as proof of this assertion in~{\rm \cite{Rani-Jyoti-Rani}} 
clarifies that the assumed continuity is topological, not digital;
the argument is also flawed by its reliance on Assertion~\ref{RJ3.3.5}, which we have seen
is not generally valid. Thus, the assertion must be regarded as unproven.
\end{remarks}

As above, we can drop the assumption of continuity from Assertion~\ref{RJRcompatibleThm}
if we assume $X$ is finite or $d$ is an $\ell_p$ metric, as shown in the following.

\begin{thm}
\label{correctedRJRcompatibleThm}
Let $S$ and $T$ be compatible maps of a digital metric space
$(X,d,\kappa)$ to itself, where $X$ is finite or $d$
is an $\ell_p$ metric. If $S$ and $T$ satisfy~(\ref{3.1.4condition}) for some $\alpha \in (0,1)$, then they have a unique common fixed point in~$X$.
\end{thm}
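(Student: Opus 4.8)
The plan is to adapt the classical Jungck-type argument for common fixed points of compatible maps, but to replace every appeal to continuity (which the flawed proofs used, and which fails for digital maps by Example~\ref{nonStdMetric}) with an appeal to Lemma~\ref{substLemma}, whose hypotheses are exactly those assumed here. First I would construct an iteration sequence. Since $S(X) \subset T(X)$, starting from any $x_0 \in X$ I can inductively choose $x_{n+1} \in X$ with $T(x_{n+1}) = S(x_n)$, and set $y_n = S(x_n) = T(x_{n+1})$. The contraction half of~(\ref{3.1.4condition}) then gives $d(y_n, y_{n+1}) = d(S(x_n), S(x_{n+1})) \le \alpha\, d(T(x_n), T(x_{n+1})) = \alpha\, d(y_{n-1}, y_n)$, so $d(y_n, y_{n+1}) \le \alpha^n d(y_0, y_1)$ and $\{y_n\}$ is Cauchy.

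Because $X$ is finite or $d$ is an $\ell_p$ metric, Proposition~\ref{eventuallyConst} (equivalently Theorem~\ref{HanTrivialization}) forces $\{y_n\}$ to be eventually constant; let $t$ be its eventual value. Note that $t \in X$ automatically, since an eventually constant sequence has one of its own terms as its limit, so no separate completeness hypothesis is needed. Since $T(x_n) = y_{n-1}$ and $S(x_n) = y_n$, this yields $\lim_{n \to \infty} S(x_n) = \lim_{n \to \infty} T(x_n) = t \in X$. I would then invoke Lemma~\ref{substLemma}(2), whose hypotheses (compatibility, $X$ finite or $d$ an $\ell_p$ metric, and a sequence satisfying the displayed limit condition) are now all in place. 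It gives $S(t) = T(t)$; writing $w = S(t) = T(t)$, part~(1) of the same lemma also gives $S(w) = S(T(t)) = T(S(t)) = T(w)$.

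To see that $w$ is a common fixed point, I would apply the contraction inequality to the pair $(w, t)$: since $S(t) = w$, $T(t) = w$, and $T(w) = S(w)$, we obtain $d(S(w), w) = d(S(w), S(t)) \le \alpha\, d(T(w), T(t)) = \alpha\, d(S(w), w)$, and $\alpha < 1$ forces $S(w) = w$, hence $T(w) = S(w) = w$. Uniqueness is the standard estimate: if $w, w'$ are both common fixed points, then $d(w, w') = d(S(w), S(w')) \le \alpha\, d(T(w), T(w')) = \alpha\, d(w, w')$, so $\alpha < 1$ gives $w = w'$.

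The main obstacle, and the whole point of the correction, is the passage that the flawed proofs handled with a continuity assumption: deducing from $\lim_{n \to \infty} S(x_n) = \lim_{n \to \infty} T(x_n) = t$ that $S(t) = T(t)$ and that the relevant images commute. For digital maps one cannot simply write $S(\lim_{n \to \infty} x_n) = \lim_{n \to \infty} S(x_n)$. Everything therefore hinges on Lemma~\ref{substLemma}, which extracts precisely this conclusion from compatibility together with the eventually-constant phenomenon of the $\ell_p$ or finite setting; verifying that its hypotheses are genuinely met, in particular that the common limit $t$ really lies in $X$, is the step requiring care.
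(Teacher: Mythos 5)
Your proposal is correct and follows essentially the same route as the paper's own proof: the same Jungck-style iteration using $S(X) \subset T(X)$, eventual constancy via Theorem~\ref{HanTrivialization} in place of completeness, Lemma~\ref{substLemma} to get $S(t)=T(t)$ and $S(T(t))=T(S(t))$ without any continuity hypothesis, and the standard contraction estimates for the fixed point and uniqueness. The only differences are cosmetic (your fixed-point computation is applied to the pair $(w,t)$ rather than to $(t,S(t))$, but it is the same inequality).
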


\begin{proof}
We use ideas from the analog in~\cite{Rani-Jyoti-Rani}.

Let $x_0 \in X$. Since $S(X) \subset T(X)$, we can let $x_1 \in X$ such that $T(x_1)=S(x_0)$,
and, inductively, $x_n \in X$ such that $T(x_n)=S(x_{n-1})$ for all $n \in \N$. Then for
all $n>0$,
\[ d(T(x_{n+1}), T(x_n)) = d(S(x_n), S(x_{n-1})) \le \alpha d(T(x_n), T(x_{n-1})).
\]
It follows that $d(T(x_{n+1}), T(x_n)) \le \alpha^n d(T(x_1), T(x_0))$.
By Theorem~\ref{HanTrivialization}, there exists $t \in X$ such that $T(x_n)=t$ for almost
all~$n$. Our choice of the sequence $x_n$ then implies $S(x_n)=t$ for almost all~$n$.
By Lemma~\ref{substLemma}, $S(t) = T(t)$ and $S(T(t)) = T(S(t))$. Then
\[ d(S(t), S(S(t))) \le \alpha d(T(t), T(S(t))) = \alpha d(S(t), S(T(t))) =
   \alpha d(S(t), S(S(t))),
\]
so $(1 - \alpha) d(S(t), S(S(t))) \le 0$. Therefore, $d(S(t), S(S(t)))=0$, so 
\[S(t)=S(S(t))=S(T(t)) = T(S(t)).\]
Thus $S(t)$ is a common fixed point of $S$ and $T$.

To show the uniqueness of $t$ as a common fixed point, suppose $S(x)=T(x)=x$ and $S(y)=T(y)=y$.
Then
\[ d(x,y) = d(S(x), S(y)) \le \alpha d(T(x), T(y)) = \alpha d(x,y),
\]
so $(1 - \alpha) d(x,y) \le 0$, so $x=y$.
\end{proof}

The following is stated as {\bf Theorem 3.4.3 of~\cite{Rani-Jyoti-Rani}}.

\begin{assert}
\label{RJRweakCompatibleThm}
Let $S$ and $T$ be weakly compatible maps of a complete digital metric space
$(X,d,\kappa)$ to itself. Then $S$ and $T$ have a unique common fixed point in $X$ if
either of $S(X)$ or $T(X)$ is complete, and for some $\alpha \in (0,1)$,
statement~{\rm (\ref{3.1.4condition})} is satisfied.
\end{assert}

\begin{remarks}
The argument given in~{\rm \cite{Rani-Jyoti-Rani}} as a proof for Assertion~\ref{RJRweakCompatibleThm}
defines a sequence $\{x_n\}_{n=1}^{\infty} \subset X$
such that $\lim_{n \to \infty} S(x_n) = \lim_{n \to \infty} T(x_n) = t \in X$. From this is claimed that a subsequence of
$\{x_n\}_{n=1}^{\infty}$ converges to a limit in $X$. How this is justified is unclear. Therefore,
Assertion~\ref{RJRweakCompatibleThm} as stated is unproven. If we additionally assume that $X$ is finite, then the
claim, that a subsequence of $\{x_n\}_{n=1}^{\infty}$ converges to a limit in $X$, is certainly justified.

The following is a version of Assertion~\ref{RJRweakCompatibleThm} with the additional hypothesis
that $X$ is finite. We have not stated an assumption of completeness, since a 
finite metric space must be complete.
\end{remarks}

\begin{thm}
\label{correctedRJRweakCompatibleThm}
Let $S$ and $T$ be weakly compatible maps of a digital metric space
$(X,d,\kappa)$ to itself, where $X$ is finite. Then $S$ and $T$ have a unique common fixed point in 
$X$ if for some $\alpha \in (0,1)$,~{\rm (\ref{3.1.4condition})} is satisfied.
\end{thm}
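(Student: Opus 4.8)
The plan is to reduce this theorem to the already-established compatible case, Theorem~\ref{correctedRJRcompatibleThm}, rather than repeating the full fixed-point construction. The key observation is that the hypotheses here are weaker in form but, under the finiteness assumption, equivalent in substance to those of the compatible version. First I would invoke Theorem~\ref{compatWeakCompat}: since $X$ is finite, the weakly compatible pair $S,T$ is in fact compatible. Next I would note that a finite metric space is automatically complete, so no separate completeness hypothesis is needed, and that finiteness already supplies (via Theorem~\ref{HanTrivialization}) the convergence behavior on which Theorem~\ref{correctedRJRcompatibleThm} relies. With $S,T$ compatible, $X$ finite, and condition~(\ref{3.1.4condition}) holding for some $\alpha \in (0,1)$, the hypotheses of Theorem~\ref{correctedRJRcompatibleThm} are met verbatim, and its conclusion---a unique common fixed point---transfers directly.

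For a self-contained argument mirroring Theorem~\ref{correctedRJRcompatibleThm}, I would instead pick $x_0 \in X$, use $S(X) \subset T(X)$ to build a sequence with $T(x_n)=S(x_{n-1})$, and derive the geometric bound $d(T(x_{n+1}), T(x_n)) \le \alpha^n d(T(x_1), T(x_0))$, so that Theorem~\ref{HanTrivialization} yields $T(x_n)=t$, and hence $S(x_n)=t$, for almost all~$n$. The one place this genuinely differs from the compatible case is in establishing $S(t)=T(t)$: I cannot cite Lemma~\ref{substLemma}, which presumes compatibility, so instead I would use finiteness to extract a constant subsequence $x_{n_i} \equiv y$, conclude $S(y)=T(y)=t$, and apply weak compatibility at~$y$ to obtain $S(t)=S(T(y))=T(S(y))=T(t)$.

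Applying weak compatibility once more, now at~$t$ (where $S(t)=T(t)$ has just been shown), yields $S(T(t))=T(S(t))$. The contraction inequality then gives $d(S(t), S(S(t))) \le \alpha\, d(T(t), T(S(t))) = \alpha\, d(S(t), S(S(t)))$, where the middle equality uses $T(t)=S(t)$ together with $T(S(t))=S(T(t))=S(S(t))$; hence $(1-\alpha)\,d(S(t),S(S(t))) \le 0$, so $S(S(t))=S(t)$ and $S(t)$ is a common fixed point. Uniqueness follows exactly as in Theorem~\ref{correctedRJRcompatibleThm}: if $S(x)=T(x)=x$ and $S(y)=T(y)=y$, then $d(x,y)=d(S(x),S(y)) \le \alpha\, d(T(x),T(y)) = \alpha\, d(x,y)$ forces $x=y$.

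The main obstacle---modest, but the only real content beyond bookkeeping---is the derivation of $S(t)=T(t)$ from weak compatibility alone. Weak compatibility asserts nothing at~$t$ itself unless one first locates an actual point where $S$ and $T$ agree, and finiteness is precisely what promotes the limiting equality $\lim_{n\to\infty} S(x_n) = \lim_{n\to\infty} T(x_n)$ into a genuine coincidence $S(y)=T(y)$ at a single point~$y$. This is also exactly the gap that the reduction route sidesteps, since Theorem~\ref{compatWeakCompat} already encodes this use of finiteness; I would therefore present the reduction as the primary proof and regard the direct construction as an explicit unpacking of it.
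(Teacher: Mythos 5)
Your primary reduction is essentially the paper's own proof: finiteness upgrades weak compatibility to compatibility, and then Theorem~\ref{correctedRJRcompatibleThm} applies directly (your citation of Theorem~\ref{compatWeakCompat} is in fact the apt one, since the paper cites Theorem~\ref{manyEquivs}, which treats the type-A/B/C/P variants rather than weak compatibility). Your self-contained unpacking is also sound, but it is not needed once the two-line reduction is in place.
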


\begin{proof}
Since $X$ is finite, it follows from Theorem~\ref{manyEquivs} that $S$ and $T$ are compatible.
The assertion follows from Theorem~\ref{correctedRJRcompatibleThm}.
\end{proof}

Note also that Theorems~\ref{RJRcompatibleThm}, \ref{correctedRJRcompatibleThm}, and~\ref{correctedRJRweakCompatibleThm} are limited by Proposition~\ref{shrinkage}.

%\todo{Do we want to point out the error in RJR Example 3.3.8?}

\subsection{Fixed point assertions of~\cite{EgeEtal}}
The following appears as {\bf Proposition~4.10 of~\cite{EgeEtal}}. Apparently, the
authors neglected to state a hypothesis that $S$ and $T$ are compatible; they
used this hypothesis in their ``proof", and with this hypothesis, the
desired conclusion is correctly reached.

\begin{assert}
\label{EgeEtal4.10}
Let $S,T \in C(X,\kappa)$ for a digital metric space $(X,d,\kappa)$.
If $S(t)=T(t)$ for some $t \in X$, then 
\[ S(T(t)) = T(S(t)) = S(S(t)) = T(T(t)).
\]
\end{assert}

As stated, this is incorrect, as shown by the following example.

\begin{exl}
Let $S,T: \N \to \N$ be the functions
\[ S(x)=2,~~~ T(x) = x+1.
\]
Then $S,T \in C(\N, c_1)$ and $S(1)=T(1)=2$, but
\[ S(T(1))= S(S(1)) = 2,~~~ T(S(1)) = T(T(1)) = 3.
\]
\end{exl}

Further, the argument of~\cite{EgeEtal} does not use the hypothesis of continuity. Thus, Assertion~\ref{EgeEtal4.10} should be stated as follows.

\begin{prop}
{\rm \cite{EgeEtal}}
\label{correctedEgeEtal4.10}
Let $S,T: X \to X$ for a digital metric space $(X,d,\kappa)$. Suppose $S$ and $T$ 
are compatible. If $S(t)=T(t)$ for some $t \in X$, then 
\[ S(T(t)) = T(S(t)) = S(S(t)) = T(T(t)).
\] 
\end{prop}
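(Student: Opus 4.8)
The plan is to reduce the four-way equality to the single equality $S(T(t)) = T(S(t))$, which then follows from compatibility applied to a constant sequence.

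First I would observe that, since $S(t) = T(t)$, I may apply $S$ to both sides to obtain $S(S(t)) = S(T(t))$, and apply $T$ to both sides to obtain $T(S(t)) = T(T(t))$. These are purely formal substitutions requiring no hypothesis beyond $S(t) = T(t)$. Consequently it suffices to establish $S(T(t)) = T(S(t))$, for then chaining the three equalities yields $S(S(t)) = S(T(t)) = T(S(t)) = T(T(t))$, which is the claim.

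Next I would invoke compatibility via the constant sequence $x_n = t$ for all $n$. Then $S(x_n) = S(t)$ and $T(x_n) = T(t) = S(t)$ for every $n$, so $\lim_{n \to \infty} S(x_n) = \lim_{n \to \infty} T(x_n) = S(t) \in X$, and the hypothesis~(\ref{compatSeq}) of Definition~\ref{compatDef} is met. Compatibility then yields $\lim_{n \to \infty} d(S(T(x_n)), T(S(x_n))) = 0$; but since the sequence is constant, $S(T(x_n)) = S(T(t))$ and $T(S(x_n)) = T(S(t))$ for all $n$, so this limit equals $d(S(T(t)), T(S(t)))$. Hence $d(S(T(t)), T(S(t))) = 0$, giving $S(T(t)) = T(S(t))$. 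Equivalently, this step is exactly part~1) of Lemma~\ref{substLemma}, which I could cite directly.

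There is no substantive obstacle here: the only point requiring care is recognizing that a constant sequence is admissible in Definition~\ref{compatDef}, since its common limit $S(t) = T(t)$ does lie in $X$, and that the limit of the resulting constant distance sequence is just that single distance value. I would note that the hypothesis $S, T \in C(X,\kappa)$ appearing in Assertion~\ref{EgeEtal4.10} plays no role whatsoever in the argument, which is precisely the observation motivating the restatement as Proposition~\ref{correctedEgeEtal4.10}.
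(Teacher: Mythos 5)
Your proof is correct and takes essentially the same approach as the paper: the constant-sequence compatibility argument you use for $S(T(t)) = T(S(t))$ is exactly the paper's proof of part~1) of Lemma~\ref{substLemma} (and of the first half of Theorem~\ref{compatWeakCompat}), and the remaining equalities are the same formal substitutions from $S(t)=T(t)$. The paper itself does not write out a proof of Proposition~\ref{correctedEgeEtal4.10}, deferring to the (corrected) argument of~\cite{EgeEtal}, but your argument is precisely the intended one and correctly avoids any use of continuity, finiteness of $X$, or an $\ell_p$ metric.
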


Note that Proposition~\ref{correctedEgeEtal4.10} does not
require that $X$ be finite or $d$ be an $\ell_p$ metric, 
hence generalizes Lemma~\ref{substLemma}.

The following appears as {\bf Proposition~4.11 of~\cite{EgeEtal}}.

\begin{assert}
\label{EgeEtal4.11}
Let $(X,d,\kappa)$ be a digital metric space and let 
$S,T \in C(X,\kappa)$. Suppose
$\lim_{n \to \infty} S(x_n)=\lim_{n \to \infty} T(x_n)=t \in X$. Then

(i) $\lim_{n \to \infty} T(S(x_n))=S(t)$;

(ii) $\lim_{n \to \infty} S(T(x_n))=T(t)$; and

(iii) $S(T(t)) = T(S(t))$ and $S(t) = T(t)$.
\end{assert}

The ``proof" of this assertion in~\cite{EgeEtal}
confuses topological and digital continuity.
The following shows that the assertion is not generally true.

\begin{exl}
Let $S,T: \N \cup \{0\} \to \N \cup \{0\}$ be the functions $S(x)=0$, $T(x)=x+1$.
Let $d$ be the metric of Example~\ref{nonStdMetric}. Clearly, 
$S,T \in C(\N \cup \{0\}, c_1)$,
and with respect to $d$, we have 
$\lim_{n \to \infty} S(x_n) = 0 = \lim_{n \to \infty} T(x_n)$. However, with
respect to $d$ we have

(i) $\lim_{n \to \infty} T(S(x_n))= T(0) = 1$, $S(0)=0$.

(ii) $\lim_{n \to \infty} S(T(x_n))= 0$, $T(0)=1$;

(iii) $S(T(0)) = 0$, $T(S(0)) = 1$, $S(0) = 0$, $T(0)=1$.
\end{exl}

\begin{remarks}
\label{EgeEtal4.12}
We have stated {\bf Theorem~4.12 of~\cite{EgeEtal}}
above as Assertion~\ref{JRassert1}. The argument for this assertion in~\cite{EgeEtal} is flawed
as follows.
\end{remarks}

The argument considers the case $x_n=x_{n+1}$ and reaches the statement
\[ d(T(x_n), T(x_{n+1})) = d(S(x_{n-1}), S(x_n)) \le \]
\[   \alpha \max \{d(T(x_{n-1}), T(x_n)), d(T(x_{n-1}), T(x_{n+1})), 
                 d(T(x_n), T(x_{n+1})) \}.
\]
This yields three cases, each of which is handled incorrectly:
\begin{enumerate}
    \item $d(T(x_n), T(x_{n+1})) \le \alpha d(T(x_{n-1}), T(x_n))$. Nothing further is
          stated about this case.
    \item $d(T(x_n), T(x_{n+1})) \le \alpha d(T(x_{n-1}), T(x_{n+1}))$. 
          The authors misstate this case as 
          $d(T(x_n), T(x_{n+1})) \le \alpha d(T(x_n), T(x_{n+1}))$ and propagate
          this error forward.
    \item $d(T(x_n), T(x_{n+1})) \le \alpha d(T(x_n), T(x_{n+1}))$. This implies $T(x_n)=T(x_{n+1})$, since $0 < \alpha < 1$,
          but the authors reach a slightly weaker conclusion differently. They reason that 
          $d(T(x_n), T(x_{n+1})) \le \alpha^n d(T(x_0), T(x_1))$,
          from an implied induction with the unjustified assumption that this
          case applies at every level of the induction.
\end{enumerate}
Later in the argument, the error of confusing topological 
and digital continuity also appears.

Therefore, we must consider Assertion~\ref{JRassert1} unproven.

The following is stated as {\bf Theorem~4.13 of~\cite{EgeEtal}}.

\begin{assert}
\label{EgeEtal4.13}
Let $S, T: (X,d,\kappa) \to (X,d,\kappa)$ be mappings that are
compatible of type A on a digital metric space, such that

(i) $S(X) \subset T(X)$;

(ii) $S$ or $T$ is $(\kappa,\kappa)$-continuous; and

(iii) for all $x,y \in X$ and some $\alpha \in (0,1)$,
    \[ d(Sx,Sy) \le \alpha \max \{ d(Tx,Ty), d(Tx,Sy), d(Ty,Sx), d(Tx,Sx), d(Ty,Sy)\}.
    \]
    Then $S$ and $T$ have a unique common fixed point in~$X$.
\end{assert}

However, the argument given in~\cite{EgeEtal} to prove this assertion relies
on Assertion~\ref{EgeEtal4.11}, which we have shown above is unproven.

The following is stated as {\bf Theorem~4.14 of~\cite{EgeEtal}}.

\begin{assert}
\label{EgeEtal4.14}
Let $S, T: (X,d,\kappa) \to (X,d,\kappa)$ be mappings that are
compatible of type B on a digital metric space, satisfying (i), (ii), and (iii)
of Assertion~\ref{EgeEtal4.13}.
    Then $S$ and $T$ have a unique common fixed point in~$X$.
\end{assert}

However, the argument given in~\cite{EgeEtal} to prove this assertion relies
on Assertion~\ref{JRassert1}, which we have shown above to be unproven.

The following is stated as {\bf Theorem~4.15 of~\cite{EgeEtal}}.

\begin{assert}
\label{EgeEtal4.15}
Let $S, T: (X,d,\kappa) \to (X,d,\kappa)$ be mappings that are
compatible of type C on a digital metric space, satisfying (i), (ii), and (iii)
of Assertion~\ref{EgeEtal4.13}.
    Then $S$ and $T$ have a unique common fixed point in~$X$.
\end{assert}

However, the argument given in~\cite{EgeEtal} to prove this assertion relies
on Assertion~\ref{JRassert1} and on Assertion~\ref{EgeEtal4.14},
which we have shown above to be unproven.

The following is stated as {\bf Theorem~4.16 of~\cite{EgeEtal}}.

\begin{assert}
\label{EgeEtal4.16}
Let $S, T: (X,d,\kappa) \to (X,d,\kappa)$ be mappings that are
compatible of type P on a digital metric space, satisfying (i), (ii), and (iii)
of Assertion~\ref{EgeEtal4.13}.
    Then $S$ and $T$ have a unique common fixed point in~$X$.
\end{assert}

However, the argument given in~\cite{EgeEtal} to prove this assertion relies
on Assertion~\ref{JRassert1} and on Assertion~\ref{EgeEtal4.14},
each of which we have shown above to be unproven.

\section{Commutative and weakly commutative maps}
The paper~\cite{Rani-Jyoti-Rani} discusses common fixed points for
commutative and weakly commutative maps on digital metric spaces.

\begin{definition}
\label{commute-def}
Functions $f,g: X \to X$ are {\em commutative} if $f \circ g(x) = g \circ f(x)$ for
all $x \in X$.
\end{definition}

\begin{prop}
{\rm \cite{Rani-Jyoti-Rani}}
\label{commuteThm}
Let $(X,d,\kappa)$ be a digital metric space. Let $T: X \to X$. Then $T$ has a fixed point
in $X$ if and only if there is a constant function $S: X \to X$ such that 
$S$ commutes with $T$.
\end{prop}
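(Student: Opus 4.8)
The plan is to prove both directions of the biconditional directly from the definitions. The statement relates the existence of a fixed point of $T$ to the existence of a constant self-map $S$ that commutes with $T$, so I would unpack what commutativity says when one of the two maps is constant.

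First I would prove the easier direction: suppose $T$ has a fixed point, say $T(p) = p$ for some $p \in X$. Define $S: X \to X$ to be the constant function $S(x) = p$ for all $x \in X$. Then for every $x \in X$ I compute both composites: $S(T(x)) = p$ since $S$ is constant, and $T(S(x)) = T(p) = p$ since $p$ is a fixed point of $T$. Hence $S(T(x)) = T(S(x))$ for all $x$, so $S$ and $T$ commute (in the sense of Definition~\ref{commute-def}), and $S$ is the desired constant function.

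For the converse, suppose there is a constant function $S: X \to X$ commuting with $T$; say $S(x) = c$ for all $x \in X$ and some fixed $c \in X$. Commutativity gives $S(T(x)) = T(S(x))$ for all $x \in X$. The left side is $S(T(x)) = c$ because $S$ is constant, and the right side is $T(S(x)) = T(c)$ because $S(x) = c$ for every $x$. Therefore $c = T(c)$, so $c$ is a fixed point of $T$, completing this direction.

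There is no substantial obstacle here: both directions reduce to evaluating the two composites of a constant map with $T$ and reading off the equality forced by commutativity. The only point to watch is bookkeeping — being careful that the constant produced in the first direction is exactly the fixed point, and that the constant assumed in the second direction is exactly what $T$ must fix. I would note that the metric $d$ and the adjacency $\kappa$ play no role in the argument, so the hypothesis that $(X,d,\kappa)$ is a digital metric space is not actually used; the result holds for any self-map on any set.
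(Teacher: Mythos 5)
Your proof is correct and complete. The paper itself offers no proof of this proposition---it is stated as a quoted result from \cite{Rani-Jyoti-Rani}---but your argument is exactly the natural one: in the forward direction, taking $S$ constant at a fixed point $p$ gives $S(T(x)) = p = T(p) = T(S(x))$; in the converse, evaluating the commutativity identity $S(T(x)) = T(S(x))$ at any point forces $c = T(c)$ for the constant value $c$ of $S$. Your closing observation that neither $d$ nor $\kappa$ plays any role is also well taken, and in fact echoes one of the paper's recurring criticisms of the ``digital metric space'' literature: that its assertions typically fail to use the metric and the adjacency together, so the digital-metric-space framing adds nothing to what is really a statement about arbitrary self-maps of a set.
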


The following is Theorem~3.1.4 of~\cite{Rani-Jyoti-Rani}.

\begin{thm}
\label{RJ3.1.4}
Let $T$ be a continuous mapping of a complete digital metric space $(X,d,\kappa)$ into itself.
Then $T$ has a fixed point in $X$ if and only if there exists $\alpha \in (0,1)$ and a
mapping $S: X \to X$ that commutes with $T$ and satisfies~(\ref{3.1.4condition}). 
\end{thm}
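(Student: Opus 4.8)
The plan is to treat this as the digital-metric analogue of Jungck's common fixed point theorem and to prove the two implications separately. The ``only if'' direction is immediate: if $T(p)=p$ for some $p \in X$, I would take $S$ to be the constant map $S \equiv p$. Then $S(X)=\{p\}=\{T(p)\} \subset T(X)$; the inequality $d(S(x),S(y))=0 \le \alpha\, d(T(x),T(y))$ holds for every $\alpha \in (0,1)$; and $S(T(x))=p=T(p)=T(S(x))$, so $S$ commutes with $T$ and~(\ref{3.1.4condition}) holds. Hence the required $\alpha$ and $S$ exist.

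For the ``if'' direction I would run Jungck's iteration. Fix $x_0 \in X$; since $S(X) \subset T(X)$, choose $x_n$ inductively with $T(x_n)=S(x_{n-1})$, and set $y_n=T(x_n)=S(x_{n-1})$. The contraction gives $d(y_{n+1},y_n) \le \alpha\, d(y_n,y_{n-1})$, hence $d(y_{n+1},y_n) \le \alpha^n d(y_1,y_0)$, so $\{y_n\}$ is Cauchy and, by completeness, $y_n \to z$ for some $z \in X$. Using commutativity one computes $T(y_n)=T(S(x_{n-1}))=S(T(x_{n-1}))=S(y_{n-1})$. Continuity of $T$ yields $T(y_n) \to T(z)$, while the contraction gives $d(S(y_{n-1}),S(z)) \le \alpha\, d(T(y_{n-1}),T(z)) \to 0$, so $S(y_{n-1}) \to S(z)$; since $S(y_{n-1})=T(y_n)$ for each $n$, uniqueness of limits forces $S(z)=T(z)=:w$. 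Finally commutativity gives $S(w)=S(T(z))=T(S(z))=T(w)$, and applying the contraction to the pair $(w,z)$ gives $d(S(w),w)=d(S(w),S(z)) \le \alpha\, d(T(w),T(z))=\alpha\, d(S(w),w)$; as $0<\alpha<1$ this forces $S(w)=w$, so $T(w)=S(w)=w$ and $w$ is a fixed point of $T$ (indeed a common fixed point of $S$ and $T$).

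The step I expect to be the crux---and the one most in the spirit of this paper---is passing the limit through $T$ to obtain $T(y_n) \to T(z)$ from $y_n \to z$. This is automatic when the hypothesised continuity is topological, and in the ``real-world'' case where $X$ is finite or $d$ is an $\ell_p$ metric it also holds for digitally continuous $T$ by Corollary~\ref{converge}. In that discrete situation the argument in fact simplifies and continuity can be dropped entirely: $\{y_n\}$ is eventually constant, so $S(x_n)=T(x_n)=z$ for almost all $n$, and choosing any such $x_m=:p$ gives $S(z)=S(T(p))=T(S(p))=T(z)$ directly from commutativity, after which the final contraction step is unchanged. As Example~\ref{nonStdMetric} warns, however, this limit-passing can fail for a general digital metric under only digital continuity, so for the theorem to hold as stated the continuity must be read topologically (or the finiteness, respectively $\ell_p$, assumption must be added).
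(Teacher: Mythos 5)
Your proof is correct, provided ``continuous'' in Theorem~\ref{RJ3.1.4} is read as topological ($\varepsilon$--$\delta$) continuity --- which, as the remarks accompanying Assertion~\ref{RJRcompatibleThm} and Corollary~\ref{commuteCor1} indicate, is indeed how \cite{Rani-Jyoti-Rani} uses the word. But it takes a different route from the paper, which in fact never proves Theorem~\ref{RJ3.1.4} at all: the theorem is quoted from \cite{Rani-Jyoti-Rani}, and what the paper proves is the modified version, Theorem~\ref{modifiedRJ3.1.4}, in which both continuity and completeness are discarded in favor of the hypothesis that $X$ is finite or $d$ is an $\ell_p$ metric. That proof is entirely by citation: the ``only if'' direction is Proposition~\ref{commuteThm} (exactly your constant-map construction), and the ``if'' direction invokes Proposition~\ref{commuteImpliesCompatible} followed by Theorem~\ref{correctedRJRcompatibleThm}, whose proof runs the same Jungck iteration you set up but replaces the Cauchy/completeness/continuity endgame by the observation (Theorem~\ref{HanTrivialization}) that $T(x_n)$ is eventually constant, together with the compatibility lemma (Lemma~\ref{substLemma}) to obtain $S(t)=T(t)$ and $S(T(t))=T(S(t))$. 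So the two arguments diverge precisely at the crux you single out: you pass the limit through $T$ using topological continuity, which buys the theorem in its stated generality (an arbitrary complete digital metric space); the paper declines to use continuity at all --- since digital continuity does not support that limit-passing, cf.\ Example~\ref{nonStdMetric} --- and instead buys the conclusion from discreteness, the hypothesis it regards as the honest one for ``real-world'' digital images. Your closing observation, that in the finite or $\ell_p$ case continuity can be dropped entirely because the iterates are eventually constant, is precisely the content of the second bullet of Theorem~\ref{modifiedRJ3.1.4}; so your proposal in effect contains both the classical Jungck proof of the quoted theorem and the paper's discrete replacement for it.
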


We give a modified version of Theorem~\ref{RJ3.1.4} as follows.

\begin{thm}
\label{modifiedRJ3.1.4}
Let $T$ be a mapping of a digital metric space $(X,d,\kappa)$ into itself.
\begin{itemize}
    \item If $T$ has a fixed point in $X$, then there exists 
          $\alpha \in (0,1)$ and a mapping $S: X \to X$ that
          commutes with $T$ and satisfies~(\ref{3.1.4condition}).
    \item Suppose $X$ is finite or $d$ is an $\ell_p$-metric.
          If there exists $\alpha \in (0,1)$ and a mapping 
          $S: X \to X$ that commutes with $T$ and satisfies~(\ref{3.1.4condition}), then $T$ has a fixed
          point in $X$.
\end{itemize}
\end{thm}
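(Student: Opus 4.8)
The plan is to prove the two bullets separately, in each case reducing to machinery already established in the paper. The forward direction (existence of a fixed point yields a suitable $S$) I would handle by exhibiting an explicit $S$, while the reverse direction follows almost immediately from Proposition~\ref{commuteImpliesCompatible} together with Theorem~\ref{correctedRJRcompatibleThm}.

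For the first bullet, suppose $T$ has a fixed point $p$, so $T(p)=p$. I would take $S$ to be the constant map $S(x)=p$ for all $x \in X$, and choose any $\alpha \in (0,1)$, say $\alpha = 1/2$. Then $S(X)=\{p\}$, and since $p=T(p) \in T(X)$, we have $S(X) \subset T(X)$. Moreover $d(S(x),S(y))=d(p,p)=0 \le \alpha\, d(T(x),T(y))$ trivially, so~(\ref{3.1.4condition}) holds. Finally $S(T(x))=p$ and $T(S(x))=T(p)=p$ for all $x$, so $S$ commutes with $T$. This settles the first bullet.

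For the second bullet, suppose $X$ is finite or $d$ is an $\ell_p$ metric, and that some $\alpha \in (0,1)$ and some $S$ commuting with $T$ satisfy~(\ref{3.1.4condition}). Since the commutativity of Definition~\ref{commute-def} is precisely the commuting condition, Proposition~\ref{commuteImpliesCompatible} shows $S$ and $T$ are compatible. The hypotheses of Theorem~\ref{correctedRJRcompatibleThm} are therefore all in place: $S$ and $T$ are compatible, $X$ is finite or $d$ is an $\ell_p$ metric, and~(\ref{3.1.4condition}) holds. Hence $S$ and $T$ have a (unique) common fixed point $t$, that is, $S(t)=T(t)=t$. In particular $T(t)=t$, so $T$ has a fixed point, as required.

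There is essentially no obstacle here beyond correctly invoking the prior results: the substantive work---constructing the iterated sequence $x_n$ with $T(x_n)=S(x_{n-1})$, showing via the contraction estimate and Theorem~\ref{HanTrivialization} that it is eventually constant, and applying Lemma~\ref{substLemma}---has already been carried out in the proof of Theorem~\ref{correctedRJRcompatibleThm}. The only point meriting genuine care is the verification in the first bullet that the constant map $S$ both commutes with $T$ and satisfies $S(X)\subset T(X)$; each of these hinges on $p$ being a \emph{fixed} point of $T$ rather than an arbitrary point of $X$, which is exactly what makes the constant choice work.
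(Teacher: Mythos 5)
Your proof is correct and essentially identical to the paper's: the second bullet is argued exactly as in the paper (Proposition~\ref{commuteImpliesCompatible} followed by Theorem~\ref{correctedRJRcompatibleThm}), and your explicit constant-map construction for the first bullet is precisely the content of Proposition~\ref{commuteThm}, which the paper simply cites instead of unpacking.
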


\begin{proof}
It follows from Proposition~\ref{commuteThm} that if $T$ has
a fixed point, then there is a mapping $S: X \to X$ that 
commutes with $T$ and satisfies~(\ref{3.1.4condition}).

Suppose $X$ is finite or $d$ is an $\ell_p$-metric. Suppose
there exists $\alpha \in (0,1)$ and a mapping $S: X \to X$
that commutes with $T$ and satisfies~(\ref{3.1.4condition}).
Then $S$ and $T$ are compatible by
Proposition~\ref{commuteImpliesCompatible}. It follows from
Theorem~\ref{correctedRJRcompatibleThm} that $T$ has a fixed point.
\end{proof}

We will use the following.

\begin{exl}
{\rm \cite{BxSt19}}
\label{lessNotCoContinuous}
Let $X = \{p_1,p_2,p_3\} \subset \Z^5$, where
\[ p_1=(0,0,0,0,0),~~~p_2=(2,0,0,0,0),~~~p_3=(1,1,1,1,1).
\]
Let $d$ be the Manhattan metric and let $T: (X,c_5) \to (X,c_5)$ be defined by
$T(p_1)=T(p_2)=p_1$, $T(p_3)=p_2$. Clearly $T(X) \subset 1_X(X)$, $1_X \in C(X,c_5)$, and
for all $x,y \in X$ we have $d(T(x),T(y)) \le 2/5 \, d(1_X(x), 1_X(y))$.
However, $T \not \in C(X,c_5)$ since $p_2 \adj_{c_5} p_3$ and $T(p_2) \not \adj_{c_5} T(p_3)$.
\end{exl}

\begin{cor}
{\rm \cite{Rani-Jyoti-Rani}}
\label{commuteCor1}
Let $T$ and $S$ be commuting maps of a digital metric space $(X,d,\kappa)$ into itself.
Suppose $T$ is continuous and $S(X) \subset T(X)$. If there exists $\alpha \in (0,1)$
and $k \in \N$ such that $d(S^k(x), S^k(y)) \le \alpha d(T(x),T(y))$ for all $x,y \in X$,
then $T$ and $S$ have a common fixed point.
\end{cor}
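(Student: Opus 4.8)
The plan is to apply Theorem~\ref{correctedRJRcompatibleThm} not to the pair $(S,T)$ directly but to the pair $(S^k, T)$, and then to lift the resulting fixed point back to $S$ using commutativity together with uniqueness. To invoke that theorem I would work in its corrected setting, i.e. assuming in addition that $X$ is finite or $d$ is an $\ell_p$ metric; in that setting the stated topological-continuity hypothesis on $T$ is automatic (the space is discrete) and Theorem~\ref{correctedRJRcompatibleThm} does not require continuity at all, so continuity plays no essential role.

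First I would record two closure facts about the iterate $S^k$. Since $S$ and $T$ commute, an easy induction gives $S^k \circ T = T \circ S^k$, so $S^k$ and $T$ commute and are therefore compatible by Proposition~\ref{commuteImpliesCompatible}. Next, from $S(X) \subset T(X)$ and commutativity I would show $S^k(X) \subset T(X)$: for $x \in X$ write $S(x) = T(w)$ for some $w$, whence $S^k(x) = S^{k-1}(T(w)) = T(S^{k-1}(w)) \in T(X)$; alternatively one notes $S(T(X)) = T(S(X)) \subset T(X)$ and inducts. Together with the given inequality $d(S^k(x),S^k(y)) \le \alpha\, d(T(x),T(y))$, these facts show that the pair $(S^k,T)$ satisfies condition~(\ref{3.1.4condition}) with the same $\alpha$.

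Consequently Theorem~\ref{correctedRJRcompatibleThm}, applied to $(S^k,T)$, yields a \emph{unique} common fixed point $z$ of $S^k$ and $T$, so that $S^k(z) = T(z) = z$. To finish I would lift $z$ to a common fixed point of $S$ and $T$ by exploiting this uniqueness. Consider $S(z)$: commutativity gives $T(S(z)) = S(T(z)) = S(z)$ and $S^k(S(z)) = S(S^k(z)) = S(z)$, so $S(z)$ is itself a common fixed point of $S^k$ and $T$. By uniqueness, $S(z) = z$, and combined with $T(z)=z$ this exhibits $z$ as a common fixed point of $S$ and $T$.

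The two inductions are routine, and the genuinely new idea is simply the choice to run the fixed-point theorem on $S^k$ rather than $S$, followed by the uniqueness-based lifting step that converts a common fixed point of $(S^k,T)$ into one of $(S,T)$. The main obstacle is not computational but a mismatch of hypotheses: the corollary as inherited from~\cite{Rani-Jyoti-Rani} leans on topological continuity and completeness, which this paper has argued are the wrong framework, so to obtain a valid proof I must substitute the standing assumption that $X$ is finite or $d$ is an $\ell_p$ metric, after which the continuity of $T$ becomes superfluous.
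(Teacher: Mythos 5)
Your proof is correct and takes essentially the same route as the paper: the paper likewise does not prove the corollary as literally stated, but proves a modified version (dropping continuity, assuming $X$ finite or $d$ an $\ell_p$ metric) by applying its Jungck-type fixed point theorem to the pair $(S^k,T)$ --- after noting that $S^k$ commutes with $T$ and $S^k(X) \subset T(X)$ --- and then lifting the unique common fixed point to $S$ via commutativity and uniqueness, which is exactly your argument. The only difference is bookkeeping: the paper cites Theorem~\ref{RJ3.1.4} at the step where you cite Proposition~\ref{commuteImpliesCompatible} together with Theorem~\ref{correctedRJRcompatibleThm}, and your choice is if anything better grounded, since Theorem~\ref{correctedRJRcompatibleThm} explicitly supplies the uniqueness of the common fixed point that the lifting step requires.
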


\begin{remarks}
The continuity hypothesized for Corollary~\ref{commuteCor1} in~\cite{Rani-Jyoti-Rani} is
topological continuity, not digital continuity. The assumption is used in the proof to argue 
that~(\ref{3.1.4condition}) implies $S$ is (topologically) continuous. Note if $X$ is finite or $d$ is an $\ell_p$ metric, then
$(X,d)$ is a discrete topological space and therefore every self-map on $X$ is topologically continuous.
If we were to assume instead that $T$ is digitally continuous, it would not follow from~(\ref{3.1.4condition}) that $S$ is
digitally continuous, as shown by Example~\ref{lessNotCoContinuous}.
\end{remarks}

The following is a modified version of Corollary~\ref{commuteCor1}. In it, there is no
continuity assumption, but we assume that $X$ is finite or $d$ is an $\ell_p$ metric.

\begin{cor}
Let $T$ and $S$ be commuting maps of a digital metric space $(X,d,\kappa)$ into itself.
Suppose $X$ is finite or $d$ is an $\ell_p$ metric. Suppose $S(X) \subset T(X)$. If there exists $\alpha \in (0,1)$
and $k \in \N$ such that $d(S^k(x), S^k(y)) \le \alpha d(T(x),T(y))$ for all $x,y \in X$,
then $T$ and $S$ have a common fixed point.
\end{cor}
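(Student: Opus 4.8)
The plan is to apply Theorem~\ref{correctedRJRcompatibleThm} not to the pair $(S,T)$ directly, but to the pair $(S^k, T)$, and then to recover a common fixed point of $S$ and $T$ from the uniqueness clause of that theorem.

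First I would verify that $S^k$ and $T$ satisfy the hypotheses of Theorem~\ref{correctedRJRcompatibleThm}. Since $S$ and $T$ commute, a straightforward induction on $k$ shows that $S^k$ and $T$ commute as well; hence by Proposition~\ref{commuteImpliesCompatible} the maps $S^k$ and $T$ are compatible. Next, because $S^{k-1}(X) \subset X$, we have $S^k(X) = S(S^{k-1}(X)) \subset S(X) \subset T(X)$, so the inclusion required by~(\ref{3.1.4condition}) holds with $S^k$ in place of $S$. The remaining inequality $d(S^k(x), S^k(y)) \le \alpha \, d(T(x),T(y))$ is precisely the hypothesis given. Thus $(S^k, T)$ satisfies~(\ref{3.1.4condition}), and since $X$ is finite or $d$ is an $\ell_p$ metric, Theorem~\ref{correctedRJRcompatibleThm} yields a \emph{unique} common fixed point $z \in X$ of $S^k$ and $T$; that is, $S^k(z) = z = T(z)$.

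The final step is the classical device for passing from a fixed point of a power to a fixed point of the map itself. I would consider the point $S(z)$. Using commutativity, $T(S(z)) = S(T(z)) = S(z)$, so $S(z)$ is fixed by $T$; and since powers of $S$ commute with $S$, we get $S^k(S(z)) = S(S^k(z)) = S(z)$, so $S(z)$ is fixed by $S^k$. Hence $S(z)$ is a common fixed point of $S^k$ and $T$, and the uniqueness asserted by Theorem~\ref{correctedRJRcompatibleThm} forces $S(z) = z$. Combined with $T(z) = z$, this shows $z$ is a common fixed point of $S$ and $T$, as desired.

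I expect the main conceptual obstacle to be recognizing that one should apply the corrected theorem to $(S^k, T)$ and then exploit its uniqueness conclusion, rather than attempting to contract with $S$ itself (which need not satisfy a contraction estimate against $T$). Once this viewpoint is adopted, each verification is routine. A minor point requiring care is confirming that $S^k$ and $T$ genuinely commute (the induction on $k$), since both the compatibility of $(S^k,T)$ and the identity $S^k(S(z)) = S(S^k(z))$ rely upon it.
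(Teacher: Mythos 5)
Your proposal is correct and takes essentially the same route as the paper: both apply a Jungck-type common fixed point theorem to the pair $(S^k,T)$, then observe that $S(z)$ is also a common fixed point of $S^k$ and $T$ and invoke uniqueness to force $S(z)=z$. The only difference is bookkeeping: you cite Theorem~\ref{correctedRJRcompatibleThm} (via compatibility of the commuting pair $(S^k,T)$) where the paper cites Theorem~\ref{RJ3.1.4}, and your citation is arguably the more apt one, since the uniqueness of the common fixed point needed in the final step is explicit in Theorem~\ref{correctedRJRcompatibleThm}.
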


\begin{proof}
As above, we modify the analogous argument of~\cite{Rani-Jyoti-Rani}.

We see easily that $S^k$ commutes with $T$ and 
$S^k(X) \subset S(X) \subset T(X)$.
By Theorem~\ref{RJ3.1.4}, there is a unique $a \in X$ such that
$a = S^k(a) = T(a)$. Since $S$ and $T$ commute, we can apply $S$ to the above to get
\[ S(a) = S(S^k(a)) = S(T(a)) = T(S(a))\]
and, from the first equation in this chain, $S(a) = S^k(S(a))$,
so $S(a)$ is a common fixed point of $T$ and $S^k$. Since $a$ is unique as a common fixed
point of $T$ and $S^k$, we must have $a=S(a)=T(a)$.
\end{proof}

A function $T: X \to X$ on a digital metric space $(X,d,\kappa)$ is a 
{\em digital expansive mapping}~\cite{Jyoti-Rani18}
if for some $k>1$ and all $x,y \in X$, $d(T(x),T(y)) \ge k d(x,y)$. However, 
this definition is quite limited, as shown by the following, which combines
Theorems~4.8 and~4.9 of~\cite{BxSt19}.

\begin{thm}
\label{expansiveNotWork}
Let $(X,d,\kappa)$ be a digital metric space. Suppose there are points
$x_0,y_0 \in X$ such that
\[ d(x_0,y_0) \in \{\min\{d(x,y)\,|\,x,y \in X, x \neq y\},~
                    \max\{d(x,y)\,|\,x,y \in X, x \neq y\}\}.
\]
Then there is no $T: X \to X$ that is both onto and a digital expansive mapping.
\end{thm}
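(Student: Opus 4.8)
The plan is to argue by contradiction. Suppose $T \colon X \to X$ is simultaneously onto and digitally expansive, so that there is a constant $k > 1$ with $d(T(x),T(y)) \ge k\,d(x,y)$ for all $x,y \in X$. I would introduce the abbreviations $m = \min\{d(x,y) : x,y \in X,\ x \neq y\}$ and $M = \max\{d(x,y) : x,y \in X,\ x \neq y\}$, noting that the hypothesis guarantees at least one of these extrema is attained and that $d(x_0,y_0)$ equals it. Since any such extremal value is the distance between \emph{distinct} points, we have $d(x_0,y_0) > 0$ and hence $x_0 \neq y_0$. The proof then splits into the two cases $d(x_0,y_0) = M$ and $d(x_0,y_0) = m$, handled by opposite moves.

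In the maximum case I would push the extremal pair \emph{forward} under $T$, which actually requires no appeal to surjectivity: expansiveness gives $d(T(x_0),T(y_0)) \ge k\,d(x_0,y_0) = kM > M$, using $k > 1$ and $M > 0$. Since $T(x_0),T(y_0) \in X$, there are only two possibilities, both contradictory. If $T(x_0) \neq T(y_0)$, then $d(T(x_0),T(y_0)) \le M$ by the definition of $M$, contradicting $d(T(x_0),T(y_0)) > M$; and if $T(x_0) = T(y_0)$, then $d(T(x_0),T(y_0)) = 0$, contradicting $d(T(x_0),T(y_0)) \ge kM > 0$.

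In the minimum case, surjectivity becomes essential, and I would instead pull a realizing pair \emph{back} through $T$. Because $T$ is onto, choose $a,b \in X$ with $T(a) = x_0$ and $T(b) = y_0$; since $x_0 \neq y_0$ we get $a \neq b$, and hence $d(a,b) \ge m$ by the definition of $m$. But expansiveness yields $m = d(x_0,y_0) = d(T(a),T(b)) \ge k\,d(a,b) \ge km$, so $m \ge km$; as $m > 0$ this forces $k \le 1$, contradicting $k > 1$. (Equivalently, $d(a,b) \le m/k < m$, contradicting the minimality of $m$.)

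The argument is short, and the only real care needed is bookkeeping: recognizing that the maximum case is driven by expansiveness alone while the minimum case is where ontoness does the work, and disposing of the degenerate subcase $T(x_0) = T(y_0)$ in the maximum case. Thus the chief (mild) obstacle is conceptual rather than computational — seeing that ``extremal distance'' must be interpreted over distinct pairs and that the expansion factor $k > 1$ is exactly what pushes an image above $M$ or drops a preimage below $m$.
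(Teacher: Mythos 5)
Your proof is correct. Note that this paper does not actually contain a proof of the theorem: it is stated as a combination of Theorems~4.8 and~4.9 of~\cite{BxSt19}, which treat the maximum-distance and minimum-distance cases as two separate results. Your case split reproduces exactly that decomposition, and the mechanics are the natural ones for each case --- in the maximum case you push the extremal pair forward under $T$ (observing, correctly, that surjectivity is never needed there, since expansiveness alone forces $d(T(x_0),T(y_0)) \ge kM > M$ while also ruling out $T(x_0)=T(y_0)$), and in the minimum case you pull a preimage pair back through the onto map $T$ to obtain $m \ge km$, contradicting $k>1$. So your argument is a complete, self-contained proof of precisely the fact the paper outsources to~\cite{BxSt19}.
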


Note the hypothesis of Theorem~\ref{expansiveNotWork} is satisfied by every finite digital metric space.

The following appears as Corollary 3.1.6 of~\cite{Rani-Jyoti-Rani}.

\begin{assert}
\label{JRwrong3.1.6}
Let $n \in \N$, $K \in \R$, $K>1$. Let $S: X \to X$ be a $\kappa$-continuous onto mapping of
a complete digital metric space $(X,d,\kappa)$ such that $d(S^n(x), S^n(y)) \ge K d(x,y)$ for all
$x,y \in X$. Then $S$ has a unique fixed point.
\end{assert}

\begin{remarks}
Theorem~\ref{expansiveNotWork} shows that Assertion~\ref{JRwrong3.1.6} is 
vacuous for finite digital metric spaces, since $S$ being onto implies
$S^n$ is onto. Similarly, Assertion~\ref{JRwrong3.1.6} is vacuous whenever
$\kappa = c_1$, since $x \adj_{c_1} y$ implies $S^n(x) \adjeq_{c_1} S^n(y)$,
hence $d(S^n(x), S^n(y)) \le d(x,y)$.
\end{remarks}

We give a corrected and improved version of Assertion~\ref{JRwrong3.1.6} as Corollary~\ref{commuteCor2} below, 
by making the following changes. 
\begin{itemize}
    \item We do not require $S$ to be either continuous or onto, nor
          do we require completeness.
    \item We use $K \in (0,1)$ rather than $K > 1$.
    \item We use $d(S^n(x), S^n(y)) \le K d(x,y)$ instead of $d(S^n(x), S^n(y)) \ge K d(x,y)$.
\end{itemize}

\begin{cor}
\label{commuteCor2}
Let $n \in \N$ and let $K \in (0, 1)$. Let $S \in C(X,\kappa)$ for
a digital metric space $(X,d,\kappa)$ such that
$d(S^n(x), S^n(y)) \le K d(x,y)$ for all $x,y \in X$, then $S$ has a unique fixed point.
\end{cor}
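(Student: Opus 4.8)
The plan is to reduce the problem to the iterate $S^{n}$, which the hypothesis makes a contraction, locate its unique fixed point, and then bootstrap back up to $S$ itself. Note in advance that the continuity assumption $S\in C(X,\kappa)$ will not be used anywhere; what the argument genuinely requires is that $(X,d)$ be complete, which holds whenever $X$ is finite or $d$ is an $\ell_p$ metric (Theorem~\ref{HanTrivialization}). I would therefore carry the proof under that completeness hypothesis, since without it a contraction on a digital metric space can fail to have a fixed point.

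First I would produce a fixed point of $S^{n}$ using the machinery already developed for commuting maps. The map $S^{n}$ commutes with the identity $1_{X}$, and trivially $S^{n}(X)\subset X = 1_{X}(X)$, while the hypothesis gives $d(S^{n}(x),S^{n}(y))\le K\,d(1_{X}(x),1_{X}(y))$ for all $x,y\in X$; thus the pair $(S^{n},1_{X})$ satisfies~(\ref{3.1.4condition}) with $\alpha=K\in(0,1)$. Since commuting maps are compatible (Proposition~\ref{commuteImpliesCompatible}), Theorem~\ref{correctedRJRcompatibleThm} applies (invoking $X$ finite or $d$ an $\ell_p$ metric) and yields a unique common fixed point $a$ of $S^{n}$ and $1_{X}$. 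Because every point is fixed by $1_{X}$, this $a$ is precisely the unique fixed point of $S^{n}$.

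Next I would pass from $S^{n}$ to $S$. Applying $S$ to the identity $S^{n}(a)=a$ and using $S\circ S^{n}=S^{n}\circ S$ gives $S^{n}(S(a))=S(S^{n}(a))=S(a)$, so $S(a)$ is again a fixed point of $S^{n}$; by the uniqueness just established, $S(a)=a$, i.e.\ $a$ is a fixed point of $S$. For uniqueness of the fixed point of $S$, any $b$ with $S(b)=b$ satisfies $S^{n}(b)=b$, so $b$ is a fixed point of $S^{n}$ and hence $b=a$. This proves $S$ has a unique fixed point.

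The only real obstacle is the first step, the existence of a fixed point of the contraction $S^{n}$: this is exactly where completeness is indispensable, which is why I regard the finite-or-$\ell_p$ hypothesis as essential rather than cosmetic. (An alternative route is to apply the commuting-maps corollary immediately preceding this one with $T=1_{X}$, $k=n$, and $\alpha=K$, obtaining a common fixed point of $S$ and $1_{X}$ directly; but routing through the unique fixed point of $S^{n}$ makes the uniqueness claim for $S$ cleaner.)
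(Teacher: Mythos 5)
Your proof is correct, and it takes a genuinely different (and more self-contained) route than the paper. The paper's own proof is a one-liner: take $T = 1_X \in C(X,\kappa)$ and apply Corollary~\ref{commuteCor1} with $k=n$, $\alpha=K$; since every point is fixed by $1_X$, a common fixed point of $S$ and $1_X$ is a fixed point of $S$. You instead apply Theorem~\ref{correctedRJRcompatibleThm} to the commuting (hence, by Proposition~\ref{commuteImpliesCompatible}, compatible) pair $(S^n,1_X)$, extract the unique fixed point $a$ of $S^n$, and bootstrap via $S^n(S(a))=S(S^n(a))=S(a)$ --- in effect re-deriving, for the special case $T=1_X$, the unnamed corollary immediately preceding Corollary~\ref{commuteCor2} (the alternative you mention at the end). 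Your route buys three real things. First, it relies only on results the paper has itself corrected and proved, whereas Corollary~\ref{commuteCor1} is quoted from the flawed source \cite{Rani-Jyoti-Rani} and its proof there requires completeness. Second, your insistence on the finite-or-$\ell_p$ (i.e., completeness) hypothesis is substantive, not cosmetic: as printed, Corollary~\ref{commuteCor2} carries no such hypothesis, and it is false without one. For example, take $X=\N$ with $c_1$-adjacency, $d(x,y)=|2^{-x}-2^{-y}|$, and $S(x)=x+1$; then $S\in C(X,c_1)$, $d(S(x),S(y))=\frac{1}{2}\,d(x,y)$, yet $S$ has no fixed point --- the same example also defeats Corollary~\ref{commuteCor1} as stated in this paper, since $1_X$ is topologically continuous. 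Third, you actually establish uniqueness, which the paper's citation does not deliver: Corollary~\ref{commuteCor1} asserts only the existence of a common fixed point (uniqueness is, of course, immediate from the contraction inequality, but the paper never says so). Your observation that the hypothesis $S\in C(X,\kappa)$ is never used is also accurate; in the paper's proof the continuity requirement of Corollary~\ref{commuteCor1} falls on $T=1_X$, not on $S$.
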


\begin{proof}
Take $T = 1_X \in C(X,\kappa)$. Then this assertion follows from Corollary~\ref{commuteCor1}.
\end{proof}

We modify assumptions of the second bullet of
Theorem~\ref{modifiedRJ3.1.4} to obtain 
a similar result with a much shorter proof.

\begin{thm}
Let $(X,d,c_u)$ be a digital metric space, where $d$ is an $\ell_p$ metric and
$X$ is $c_u$-connected. Let $T \in C(X,c_u)$.
Suppose we have a function $S: X \to X$ such that $S$ commutes with $T$,
$S(X) \subset T(X)$, and for some $\alpha \in (0,1/u^{1/p})$ and all $x,y \in X$,
$d(S(x),S(y)) \le \alpha d(T(x),T(y))$. Then $S$ is constant, and
$S$ and $T$ have a unique common fixed point.
\end{thm}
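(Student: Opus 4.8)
The plan is to assemble the conclusion directly from results established earlier in the paper, since the hypotheses here are tailored to invoke them. First I would observe that the constancy of $S$ is immediate: the hypotheses state that $X$ is $c_u$-connected, that $T \in C(X,c_u)$, that $d$ is an $\ell_p$ metric, that $\alpha \in (0,1/u^{1/p})$, and that $d(S(x),S(y)) \le \alpha\, d(T(x),T(y))$ for all $x,y \in X$. These are exactly the hypotheses of Proposition~\ref{shrinkage}, so that proposition forces $S$ to be a constant function, disposing of the first assertion with no additional work.

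Next I would establish compatibility of $S$ and $T$. Since $S$ commutes with $T$ by assumption, Proposition~\ref{commuteImpliesCompatible} gives compatibility at once.

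For the common fixed point, I would check that the contraction hypothesis fits Theorem~\ref{correctedRJRcompatibleThm}. The one small observation needed is that $1/u^{1/p} \le 1$, since $u \ge 1$ and $p \ge 1$ yield $u^{1/p} \ge 1$; hence $\alpha \in (0,1/u^{1/p}) \subseteq (0,1)$. Combined with $S(X) \subset T(X)$ and the contraction inequality, this is precisely condition~(\ref{3.1.4condition}) for an $\alpha \in (0,1)$. Because $d$ is an $\ell_p$ metric, Theorem~\ref{correctedRJRcompatibleThm} applies and delivers a unique common fixed point of $S$ and $T$.

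I do not expect any genuine obstacle: the mathematical content lives entirely in the earlier results, and this statement is just the conjunction of Proposition~\ref{shrinkage} (for constancy) with the compatibility route into Theorem~\ref{correctedRJRcompatibleThm} (for the fixed point). The only point worth flagging is the elementary inequality $1/u^{1/p} \le 1$ that certifies $\alpha < 1$, ensuring the contraction hypothesis of Theorem~\ref{correctedRJRcompatibleThm} is met; everything else is a verbatim matching of hypotheses, which is exactly why the proof is much shorter than that of Theorem~\ref{modifiedRJ3.1.4}.
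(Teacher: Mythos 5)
Your proof is correct, and the first half (constancy of $S$ via Proposition~\ref{shrinkage}) coincides with the paper's. Where you diverge is in producing the common fixed point: you route through Proposition~\ref{commuteImpliesCompatible} and Theorem~\ref{correctedRJRcompatibleThm}, checking $\alpha < 1/u^{1/p} \le 1$ so that~(\ref{3.1.4condition}) holds with $\alpha \in (0,1)$; this is essentially the same path used to prove the second bullet of Theorem~\ref{modifiedRJ3.1.4}, and it is exactly the machinery (resting on Lemma~\ref{substLemma} and Theorem~\ref{HanTrivialization}) that the paper's version of this theorem was explicitly designed to bypass --- the paper introduces it as ``a similar result with a much shorter proof.'' The paper instead exploits the constancy you have already established: writing $x_0$ for the value of $S$, commutativity gives
\[ T(x_0) = T(S(x_0)) = S(T(x_0)) = x_0 = S(x_0), \]
so $x_0$ is a common fixed point, and it is unique because any common fixed point $y$ satisfies $y = S(y) = x_0$. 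That two-line argument needs nothing beyond Proposition~\ref{shrinkage} and commutativity (it does not even genuinely use $S(X) \subset T(X)$ or compatibility), which better serves the paper's rhetorical point that under these hypotheses the fixed-point conclusion is trivial. Your approach buys reuse of already-proved results and nothing has to be recomputed, but it is strictly heavier; in particular, once you know $S$ is constant, invoking a full compatible-maps fixed point theorem is more than the situation requires.
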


\begin{proof}
It follows from Proposition~\ref{shrinkage} that $S$ is a constant function.
Since $S(X) \subset T(X)$, the value $x_0$ taken by $S$ is a member of $T(X)$, and
since $S$ commutes with $T$,
\[ T(x_0) = T(S(x_0)) = S(T(x_0)) = x_0 = S(x_0).
\]
Since $S$ is constant, $x_0$ is a unique common fixed point.
\end{proof}

The following is Theorem~3.2.3 of~\cite{Rani-Jyoti-Rani}.

\begin{thm}
\label{RJ3.2.3}
Let $T$ be a mapping of a complete digital metric space $(X,d,\kappa)$ into itself.
Then $T$ has a fixed point in $X$ if and only if there exists $\alpha \in (0,1)$
and a mapping $S: X \to X$ that commutes weakly with $T$ and satisfies~(\ref{3.1.4condition}).
Indeed $T$ and $S$ have a unique common fixed point if~(\ref{3.1.4condition}) holds.
\end{thm}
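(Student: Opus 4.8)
The plan is to treat the biconditional as two implications and fold the uniqueness (the ``indeed'' clause) into the harder converse. The forward implication is essentially Proposition~\ref{commuteThm}: if $T(a)=a$, let $S$ be the constant map with value $a$. Then $S(X)=\{a\}=\{T(a)\}\subset T(X)$; the contraction $d(S(x),S(y))=0\le\alpha\,d(T(x),T(y))$ holds for every $\alpha\in(0,1)$; and $S(T(x))=a=T(a)=T(S(x))$ for all $x$, so $S$ commutes with $T$ and therefore commutes weakly with $T$. Hence $(\ref{3.1.4condition})$ holds with a weakly commuting partner $S$, and this direction needs neither completeness nor any restriction on $X$ or $d$.

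For the converse together with uniqueness I would reuse the Jungck iteration from the proof of Theorem~\ref{correctedRJRcompatibleThm}. Fix $x_0\in X$ and, using $S(X)\subset T(X)$, choose $x_n$ with $T(x_n)=S(x_{n-1})$; the contraction then yields $d(T(x_{n+1}),T(x_n))\le\alpha\,d(T(x_n),T(x_{n-1}))\le\alpha^{n}d(T(x_1),T(x_0))$, so $\{T(x_n)\}$ is Cauchy. The crucial move is to turn this Cauchy sequence into an actual coincidence point. Invoking the paper's standing trivialization, Theorem~\ref{HanTrivialization} shows that when $X$ is finite or $d$ is an $\ell_p$ metric the sequence $\{T(x_n)\}$ is eventually equal to a single $t\in X$, whence $S(x_n)=T(x_{n+1})=t=T(x_n)$ for almost all $n$ and any such $x_n$ is a coincidence point. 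Weak commutativity, which implies weak compatibility, then gives $S(t)=S(T(x_n))=T(S(x_n))=T(t)$, and feeding this back into the contraction, $d(S(t),t)=d(S(t),S(x_n))\le\alpha\,d(T(t),T(x_n))=\alpha\,d(S(t),t)$ forces $S(t)=T(t)=t$, a common fixed point. Uniqueness is the familiar contraction argument: if $x,y$ are common fixed points then $d(x,y)=d(S(x),S(y))\le\alpha\,d(T(x),T(y))=\alpha\,d(x,y)$, so $x=y$. When $X$ is finite the weakly commuting pair is compatible by Theorem~\ref{compatWeakCompat}, so one may instead simply cite Theorem~\ref{correctedRJRweakCompatibleThm}; the direct argument, however, also covers an infinite $X$ with an $\ell_p$ metric, and (as in Theorem~\ref{correctedRJRweakCompatibleThm}) the completeness hypothesis is then superfluous.

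The main obstacle is precisely the conversion of $\lim_n T(x_n)$ into a point of $T(X)$: in a general complete metric space, with no continuity of $T$ and no finiteness or $\ell_p$ (more generally, uniform separation) hypothesis, completeness of $X$ alone does not place the limit in $T(X)$, and the argument stalls. This is the same gap that leaves Assertion~\ref{RJRweakCompatibleThm} unproven as literally stated, so I would prove the theorem under the standing assumption that $X$ is finite or $d$ is an $\ell_p$ metric, where eventual constancy supplies the coincidence point and the reduction to the compatible case via Theorem~\ref{correctedRJRcompatibleThm} goes through. I would not attempt the fully general complete-digital-metric-space version, which I expect to require an additional continuity or subspace-completeness hypothesis of the kind the paper has repeatedly flagged as problematic.
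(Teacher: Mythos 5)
The first thing to note is that the paper contains no proof of Theorem~\ref{RJ3.2.3} against which your argument could be matched: the statement is quoted from the Rani--Jyoti--Rani paper, and this paper's entire treatment of it is the single remark that it is limited by Proposition~\ref{shrinkage} (which gives common conditions under which $S$ must be constant). So your proposal does strictly more than the paper does here, and what you prove is correct. The forward direction is, as you say, exactly Proposition~\ref{commuteThm}; and your converse-plus-uniqueness argument --- the Jungck iteration $T(x_n)=S(x_{n-1})$, the geometric decay $d(T(x_{n+1}),T(x_n))\le\alpha^n d(T(x_1),T(x_0))$, eventual constancy via Theorem~\ref{HanTrivialization}, a coincidence point $S(x_n)=T(x_n)=t$, weak compatibility giving $S(t)=T(t)$, the contraction forcing $S(t)=t$, and the standard uniqueness computation --- is precisely the mechanism behind the paper's Theorems~\ref{correctedRJRcompatibleThm} and~\ref{correctedRJRweakCompatibleThm}, and it is valid under your standing hypothesis that $X$ is finite or $d$ is an $\ell_p$ metric. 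One interpretive point worth flagging: ``commutes weakly'' is never defined in this paper; your reading (that it implies weak compatibility, i.e.\ commutation at coincidence points) is the standard one and is all your proof uses.

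Your refusal to attempt the unrestricted statement is not excess caution: the statement as quoted is in fact false, and the failure can be exhibited on the paper's own Example~\ref{nonStdMetric}. Take $X=\N\cup\{0\}$ with that metric (a compact, hence complete, digital metric space), and define $T(0)=1$, $T(n)=n$ for $n\ge 1$, and $S(0)=2$, $S(n)=2n$ for $n\ge 1$. Then $S(X)\subset T(X)=\N$, $S\circ T=T\circ S$ (so $S$ commutes with $T$ in any sense, weak or otherwise), and $d(S(x),S(y))=\frac{1}{2}\,d(T(x),T(y))$ for all $x,y\in X$, so~(\ref{3.1.4condition}) holds with $\alpha=1/2$; yet $S$ has no fixed point whatsoever, so the final clause of Theorem~\ref{RJ3.2.3} (a unique common fixed point) fails. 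The Jungck sequence here is $T(x_n)=2^n\to 0\notin T(X)$ --- exactly the escape you predicted, possible because $T(X)$ is not closed and $T$ is not topologically continuous at $0$, which is also why Jungck's classical theorem does not apply. A small variant ($T(0)=1$, $T(n)=n+1$, $S(0)=2$, $S(n)=2n+2$, again weakly commuting and satisfying~(\ref{3.1.4condition}) with $\alpha=1/2$, but with $T$ fixed-point-free) defeats even the ``if'' direction of the biconditional. So your restricted theorem is essentially optimal, the gap you identified is the same one the paper flags for Assertion~\ref{RJRweakCompatibleThm}, and if anything your analysis shows the paper is too generous in recording this statement as a theorem rather than among its flawed assertions.
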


However, Theorem~\ref{RJ3.2.3} is limited by Proposition~\ref{shrinkage}, which gives
conditions implying that the function $S$ must be constant.

\section{Commuting maps}
The paper~\cite{EgeEtal} studies common fixed points for commuting maps.

The following appears as {\bf Theorem~3.2 of~\cite{EgeEtal}}.

\begin{assert}
\label{EgeEtal3.2}
Let $\emptyset \neq X \subset \Z^n$, $n \in \N$, and let $S$ and $T$ be commuting mappings of a complete
digital metric space $(X,d,\kappa)$ into itself such that

(i) $T(X) \subset S(X)$;

(ii) $S \in C(X,\kappa)$; and

(iii) for some $\alpha \in (0,1)$ and all $x,y \in X$, $d(T(x),T(y)) \le \alpha d(S(x), S(y))$.

Then $S$ and $T$ have a common fixed point in $X$.
\end{assert}

\begin{remarks}
The argument given as proof for Assertion~\ref{EgeEtal3.2} in~\cite{EgeEtal} claims that (ii) and (iii) imply
$T \in C(X,\kappa)$, but this is incorrect (another instance of confusing topological and digital continuity), as shown in Example~\ref{lessNotCoContinuous}.
\end{remarks}

However, if we add to Assertion~\ref{EgeEtal3.2} the hypothesis that $X$ is finite or $d$ is an $\ell_p$ metric 
(and therefore $(X,d)$ is a complete metric space), then we get the following corrected version of
Assertion~\ref{EgeEtal3.2}.

\begin{thm}
\label{correctedEgeEtal3.2}
Let $\emptyset \neq X \subset \Z^n$, $n \in \N$. Let $S$ and $T$ be commuting mappings of a
digital metric space $(X,d,\kappa)$ into itself such that

(i) $T(X) \subset S(X)$; and

(ii) for some $\alpha \in (0,1)$ and all $x,y \in X$, $d(T(x),T(y)) \le \alpha d(S(x), S(y))$.

If $X$ is finite or $d$ is an $\ell_p$ metric, then $S$ and $T$ have a common fixed point in $X$.
\end{thm}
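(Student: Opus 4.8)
The plan is to recognize that, after interchanging the roles of the two maps, the hypotheses here are precisely those of Theorem~\ref{correctedRJRcompatibleThm}. The only genuine observation needed is that both ``commuting'' and ``compatible'' are symmetric in the two functions, so the interchange is legitimate.

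First I would note that since $S$ and $T$ commute, Proposition~\ref{commuteImpliesCompatible} makes them compatible. I would then remark that compatibility, as given in Definition~\ref{compatDef}, is symmetric in the two maps: the limit hypothesis $\lim_{n\to\infty} S(x_n)=\lim_{n\to\infty} T(x_n)=t$ is unchanged if $S$ and $T$ are swapped, and the conclusion $\lim_{n\to\infty} d(S(T(x_n)),T(S(x_n)))=0$ is likewise unchanged because $d$ is symmetric. Hence the pair $(T,S)$ is also compatible.

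Next I would relabel so that, in the notation of Theorem~\ref{correctedRJRcompatibleThm}, the map called ``$S$'' there is our $T$ and the map called ``$T$'' there is our $S$. Under this relabeling, hypothesis~(i), $T(X)\subset S(X)$, is exactly the inclusion required by~(\ref{3.1.4condition}), and hypothesis~(ii), $d(T(x),T(y))\le\alpha\,d(S(x),S(y))$, is exactly the contraction inequality required by~(\ref{3.1.4condition}). Since we also assume $X$ is finite or $d$ is an $\ell_p$ metric---matching the standing hypothesis of Theorem~\ref{correctedRJRcompatibleThm}---all the hypotheses of that theorem hold for the relabeled pair.

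Applying Theorem~\ref{correctedRJRcompatibleThm} to the pair $(T,S)$ then yields a unique common fixed point, and since ``common fixed point'' is a symmetric notion, this is a common fixed point of $S$ and $T$, as desired. I do not anticipate any real obstacle: the content is entirely in matching hypotheses, and the one step worth stating explicitly is the symmetry of compatibility, which is what lets us feed the pair into Theorem~\ref{correctedRJRcompatibleThm} with the roles of $S$ and $T$ reversed.
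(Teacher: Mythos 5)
Your proposal is correct and is essentially the paper's own proof: commuting implies compatible by Proposition~\ref{commuteImpliesCompatible}, and then Theorem~\ref{correctedRJRcompatibleThm} applies with the roles of $S$ and $T$ interchanged to match~(\ref{3.1.4condition}). The paper's proof consists of exactly these two steps and leaves the relabeling implicit; your explicit check that compatibility is symmetric in the two maps is a point of care the paper omits, but it is not a different argument.
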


\begin{proof}
By Proposition~\ref{commuteImpliesCompatible}, $S$ and $T$ are compatible. The result follows
from Theorem~\ref{correctedRJRcompatibleThm}.
\end{proof}

The following is presented as {\bf Corollary~3.3 of~\cite{EgeEtal}}.

\begin{assert}
\label{EgeEtal3.3}
Let $S$ and $T$ be commuting mappings of a complete digital metric space
$(X,d,\kappa)$ into itself such that

(i) $T(X) \subset S(X)$;

(ii) $S \in C(X,\kappa)$; and

(iii) for some $\alpha \in (0,1)$ and $k \in \N$ we have
\[ d(T^k(x), T^k(y)) \le \alpha d(S(x), S(y)) \mbox{ for all } x,y \in X.
\]
Then $S$ and $T$ have a unique common fixed point.
\end{assert}

However, the argument given in~\cite{EgeEtal} for this assertion depends
on Assertion~\ref{EgeEtal3.2}, shown above as unproven. Assertion~\ref{EgeEtal3.3} can
be modified as follows.

\begin{cor}
\label{correctedEgeEtal3.3}
Let $S$ and $T$ be commuting mappings of a digital metric space
$(X,d,\kappa)$ into itself such that

(i) $T(X) \subset S(X)$; and

(ii) for some $\alpha \in (0,1)$ and $k \in \N$ we have
\[ d(T^k(x), T^k(y)) \le \alpha d(S(x), S(y)) \mbox{ for all } x,y \in X.
\]
If $X$ is finite or $d$ is an $\ell_p$ metric, then $S$ and $T$ have a 
unique common fixed point.
\end{cor}

\begin{proof}
We use the analogous argument of~\cite{EgeEtal}.

Clearly, $T^k$ commutes with $S$ and $T^k(X) \subset T(X) \subset S(X)$. From
Theorem~\ref{correctedEgeEtal3.2}, there is a unique $a \in X$ such that
$a = S(a) = T^k(a)$. By applying the function $T$ and the commuting property, we have
\[ T(a) = T(S(a)) = S(T(a)) \mbox{ and } T(a) = T(T^k(a)) = T^k(T(a)),
\]
so $T(a)$ is a common fixed point of $S$ and $T^k$. But $a$ is the unique common fixed point
of $S$ and $T^k$, so we must have $a = T(a)$, and we have already observed that $a=S(a)$, so the
proof is complete.
\end{proof}

\begin{remarks}
Note that Theorem~\ref{correctedEgeEtal3.2} and Corollary~\ref{correctedEgeEtal3.3}
are limited by Proposition~\ref{shrinkage}.
\end{remarks}

\section{Further remarks}
We have discussed assertions that appeared 
in~\cite{Jyoti-Rani-Rani17b,Rani-Jyoti-Rani,EgeEtal}. 
We have discussed errors or corrections for some,
shown some to be limited or trivial, and offered 
improvements for some.


\begin{thebibliography}{XX}
%\bibitem{Berge}
%C. Berge, {\em Graphs and Hypergraphs}, 
%2nd edition, North-Holland, Amsterdam, 1976.

\bibitem{Bx99}
L. Boxer, A classical construction for the digital fundamental group, 
{\em Journal of Mathematical Imaging and Vision} 10 (1999), 51-62.

https://link.springer.com/article/10.1023/A$\%$3A1008370600456

%\bibitem{Bx17Gen}
%L. Boxer, Generalized Normal Product Adjacency in Digital Topology, 
%{\em Applied General Topology} 18 (2) (2017), 401-427.

% https://polipapers.upv.es/index.php/AGT/article/view/7798/8718

\bibitem{Bx19}
L. Boxer, 
Remarks on Fixed Point Assertions in Digital Topology, 2, 
{\em Applied General Topology} 20, (1) (2019), 155-175.

https://polipapers.upv.es/index.php/AGT/article/view/10667/11202

\bibitem{Bx19-3}
L. Boxer, Remarks on Fixed Point Assertions in Digital Topology, 3, Applied General Topology 20 (2) (2019), 349-361. 

https://polipapers.upv.es/index.php/AGT/article/view/11117

%\bibitem{BEKLL}
% L. Boxer, O. Ege, I. Karaca, J. Lopez, and J. Louwsma, Digital 
%Fixed Points, Approximate Fixed Points, and Universal Functions, 
% {\em Applied General Topology} 17(2), 2016, 159-172.
 
%Available at https://polipapers.upv.es/index.php/AGT/article/view/4704/6675
 
\bibitem{BxSt19}
L. Boxer and P.C. Staecker,
Remarks on fixed point assertions in digital topology,
{\em Applied General Topology} 20 (1) (2019), 135-153.

https://polipapers.upv.es/index.php/AGT/article/view/10474/11201

\bibitem{Dalal17}
S. Dalal,
Common fixed point results for weakly compatible
map in digital metric spaces,
{\em Scholars Journal of Physics, Mathematics and 
Statistics} 4 (4) (2017), 196-201.

http://saspjournals.com/wp-content/uploads/2018/02/SJPMS-44196-201-c.pdf

\bibitem{DalalEtal}
S. Dalal, I.A. Masmali, and G.Y. Alhamzi, Common fixed point results for
compatible map in digital metric space, 
{\em Advances in Pure Mathematics} 8 (2018), 362-371.

http://www.scirp.org/Journal/PaperInformation.aspx?PaperID=83406

\bibitem{EgeEtal}
O.Ege, D. Jain, S. Kumar, C. Park, and D. Y. Shin,
Commuting and compatible mappings in digital metric spaces,
{\em Journal of Fixed Point Theory and Applications} 22. 10.1007/s11784-019-0744-5. 

https://www.researchgate.net/publication/337326663$\_$Commuting$\_$and$\_$

compatible$\_$mappings$\_$in$\_$digital$\_$metric$\_$spaces

\bibitem{EgeKaraca15}
O. Ege and I. Karaca, 
Digital homotopy fixed point theory, 
{\em Comptes Rendus
Mathematique} 353 (11) (2015), 1029-1033.

https://www.sciencedirect.com/science/article/pii/S1631073X15001909
 
\bibitem{Han16}
S-E Han, Banach fixed point theorem from the viewpoint of digital topology,
{\em Journal of Nonlinear Science and Applications} 9 (2016), 895-905.

http://www.isr-publications.com/jnsa/articles-1915-banach-fixed-point-theorem-from-the-viewpoint-of-digital-topology

\bibitem{Jyoti-Rani18}
K. Jyoti and A. Rani,
Fixed point theorems for $\beta$ -- $\psi$ -- $\phi$-expansive 
type mappings in digital metric spaces,
{\em Asian Journal of Mathematics and Computer Research} 24 (2) (2018), 56–66.

http://ikprress.org/index.php/AJOMCOR/article/view/1021

\bibitem{Jyoti-Rani-Rani17b}
K. Jyoti, A. Rani, and A. Rani,
Common fixed point theorems for compatible and weakly compatible maps satisfying E.A. and 
CLRT property in digital metric space, 
{\em IJAMAA} 13 (1) (2017), 117-128.

\bibitem{PathakKahn}
H. K. Pathak and M. S. Khan, 
Compatible mappings of type (B) and common fixed point theorems of
Gregus type, Czechoslovak Math. J. 45 (1995), 685-698.

https://dml.cz/handle/10338.dmlcz/128555

\bibitem{Rani-Jyoti-Rani}
A. Rani, K. Jyoti, and A. Rani,
Common fixed point theorems in digital metric spaces,
{\em International Journal of Scientific \& Engineering Research} 7 (12) (2016), 1704-1716.

https://www.ijser.org/onlineResearchPaperViewer.aspx?Common-fixed-point-theorems-in-digital-metric-spaces.pdf

\bibitem{Rosenfeld}
A. Rosenfeld, `Continuous' functions on digital pictures, {\em Pattern Recognition Letters} 4,
pp. 177-184, 1986.

https://www.sciencedirect.com/science/article/pii/0167865586900176?via%3Dihub

\end{thebibliography}
\end{document}